\documentclass{amsart}
\usepackage{amssymb,amsmath,stmaryrd,mathrsfs}

\def\definetac{\newif\iftac}    
\ifx\tactrue\undefined
  \definetac
  \ifx\state\undefined\tacfalse\else\tactrue\fi
\fi

\def\definebeamer{\newif\ifbeamer}
\ifx\beamertrue\undefined
  \definebeamer
  \ifx\uncover\undefined\beamerfalse\else\beamertrue\fi
\fi

\def\definecref{\newif\ifcref}
\ifx\creftrue\undefined
  \definecref
  \creffalse
\fi

\iftac\else\usepackage{amsthm}\fi
\usepackage{tikz,tikz-cd}
\usetikzlibrary{arrows}
\ifbeamer\else
  \usepackage{enumitem}
  \usepackage{xcolor}
  \definecolor{darkgreen}{rgb}{0,0.45,0} 
  \iftac\else\usepackage[pagebackref,colorlinks,citecolor=darkgreen,linkcolor=darkgreen]{hyperref}\fi
  \renewcommand*{\backref}[1]{}
  \renewcommand*{\backrefalt}[4]{({%
      \ifcase #1 Not cited.%
            \or On p.~#2%
            \else On pp.~#2%
      \fi%
    })}
\fi
\usepackage{mathtools}          
\usepackage{graphics}           
\usepackage{ifmtarg}            
\usepackage{microtype}
\usepackage{braket}             

\usepackage{url}                
\usepackage{xspace}             
\ifcref\usepackage{cleveref,aliascnt}\fi
\usepackage[status=draft,author=]{fixme}
\fxusetheme{color}
\usepackage{mathpartir}



\makeatletter
\let\ea\expandafter

\def\mdef#1#2{\ea\ea\ea\gdef\ea\ea\noexpand#1\ea{\ea\ensuremath\ea{#2}\xspace}}
\def\alwaysmath#1{\ea\ea\ea\global\ea\ea\ea\let\ea\ea\csname your@#1\endcsname\csname #1\endcsname
  \ea\def\csname #1\endcsname{\ensuremath{\csname your@#1\endcsname}\xspace}}

\DeclareRobustCommand\widecheck[1]{{\mathpalette\@widecheck{#1}}}
\def\@widecheck#1#2{%
    \setbox\z@\hbox{\m@th$#1#2$}%
    \setbox\tw@\hbox{\m@th$#1%
       \widehat{%
          \vrule\@width\z@\@height\ht\z@
          \vrule\@height\z@\@width\wd\z@}$}%
    \dp\tw@-\ht\z@
    \@tempdima\ht\z@ \advance\@tempdima2\ht\tw@ \divide\@tempdima\thr@@
    \setbox\tw@\hbox{%
       \raise\@tempdima\hbox{\scalebox{1}[-1]{\lower\@tempdima\box
\tw@}}}%
    {\ooalign{\box\tw@ \cr \box\z@}}}


\newcount\foreachcount

\def\foreachletter#1#2#3{\foreachcount=#1
  \ea\loop\ea\ea\ea#3\@alph\foreachcount
  \advance\foreachcount by 1
  \ifnum\foreachcount<#2\repeat}

\def\foreachLetter#1#2#3{\foreachcount=#1
  \ea\loop\ea\ea\ea#3\@Alph\foreachcount
  \advance\foreachcount by 1
  \ifnum\foreachcount<#2\repeat}

\def\definescr#1{\ea\gdef\csname s#1\endcsname{\ensuremath{\mathscr{#1}}\xspace}}
\foreachLetter{1}{27}{\definescr}
\def\definecal#1{\ea\gdef\csname c#1\endcsname{\ensuremath{\mathcal{#1}}\xspace}}
\foreachLetter{1}{27}{\definecal}
\def\definebold#1{\ea\gdef\csname b#1\endcsname{\ensuremath{\mathbf{#1}}\xspace}}
\foreachLetter{1}{27}{\definebold}
\def\definebb#1{\ea\gdef\csname d#1\endcsname{\ensuremath{\mathbb{#1}}\xspace}}
\foreachLetter{1}{27}{\definebb}
\def\definefrak#1{\ea\gdef\csname f#1\endcsname{\ensuremath{\mathfrak{#1}}\xspace}}
\foreachletter{1}{9}{\definefrak}
\foreachletter{10}{27}{\definefrak}
\foreachLetter{1}{27}{\definefrak}
\def\definesf#1{\ea\gdef\csname i#1\endcsname{\ensuremath{\mathsf{#1}}\xspace}}
\foreachletter{1}{6}{\definesf}
\foreachletter{7}{14}{\definesf}
\foreachletter{15}{27}{\definesf}
\foreachLetter{1}{27}{\definesf}
\def\definebar#1{\ea\gdef\csname #1bar\endcsname{\ensuremath{\overline{#1}}\xspace}}
\foreachLetter{1}{27}{\definebar}
\foreachletter{1}{8}{\definebar} 
\foreachletter{9}{15}{\definebar} 
\foreachletter{16}{27}{\definebar}
\def\definetil#1{\ea\gdef\csname #1til\endcsname{\ensuremath{\widetilde{#1}}\xspace}}
\foreachLetter{1}{27}{\definetil}
\foreachletter{1}{27}{\definetil}
\def\definehat#1{\ea\gdef\csname #1hat\endcsname{\ensuremath{\widehat{#1}}\xspace}}
\foreachLetter{1}{27}{\definehat}
\foreachletter{1}{27}{\definehat}
\def\definechk#1{\ea\gdef\csname #1chk\endcsname{\ensuremath{\widecheck{#1}}\xspace}}
\foreachLetter{1}{27}{\definechk}
\foreachletter{1}{27}{\definechk}
\def\defineul#1{\ea\gdef\csname u#1\endcsname{\ensuremath{\underline{#1}}\xspace}}
\foreachLetter{1}{27}{\defineul}
\foreachletter{1}{27}{\defineul}

\def\autofmt@n#1\autofmt@end{\mathrm{#1}}
\def\autofmt@b#1\autofmt@end{\mathbf{#1}}
\def\autofmt@d#1#2\autofmt@end{\mathbb{#1}\mathsf{#2}}
\def\autofmt@c#1#2\autofmt@end{\mathcal{#1}\mathit{#2}}
\def\autofmt@s#1#2\autofmt@end{\mathscr{#1}\mathit{#2}}
\def\autofmt@f#1\autofmt@end{\mathsf{#1}}
\def\autofmt@k#1\autofmt@end{\mathfrak{#1}}
\def\autofmt@u#1\autofmt@end{\underline{\smash{\mathsf{#1}}}}
\def\autofmt@U#1\autofmt@end{\underline{\underline{\smash{\mathsf{#1}}}}}
\def\autofmt@h#1\autofmt@end{\widehat{#1}}
\def\autofmt@r#1\autofmt@end{\overline{#1}}
\def\autofmt@t#1\autofmt@end{\widetilde{#1}}
\def\autofmt@k#1\autofmt@end{\check{#1}}

\def\auto@drop#1{}
\def\autodef#1{\ea\ea\ea\@autodef\ea\ea\ea#1\ea\auto@drop\string#1\autodef@end}
\def\@autodef#1#2#3\autodef@end{%
  \ea\def\ea#1\ea{\ea\ensuremath\ea{\csname autofmt@#2\endcsname#3\autofmt@end}\xspace}}
\def\autodefs@end{blarg!}
\def\autodefs#1{\@autodefs#1\autodefs@end}
\def\@autodefs#1{\ifx#1\autodefs@end%
  \def\autodefs@next{}%
  \else%
  \def\autodefs@next{\autodef#1\@autodefs}%
  \fi\autodefs@next}


\DeclareSymbolFont{bbold}{U}{bbold}{m}{n}
\DeclareSymbolFontAlphabet{\mathbbb}{bbold}




\mdef\delbar{\overline{\partial}}

\mdef\hf{\textstyle\frac12 }
\mdef\thrd{\textstyle\frac13 }
\mdef\qtr{\textstyle\frac14 }

\let\iso\cong

\mdef\Id{\mathrm{Id}}
\mdef\id{\mathrm{id}}
\alwaysmath{ell}
\alwaysmath{infty}

\alwaysmath{odot}
\def\frc#1/#2.{\frac{#1}{#2}}   
\mdef\ten{\mathrel{\otimes}}

\mdef\sqten{\mathrel{\boxtimes}}



\DeclareFontFamily{U}{mathb}{\hyphenchar\font45}
\DeclareFontShape{U}{mathb}{m}{n}{
      <5> <6> <7> <8> <9> <10> gen * mathb
      <10.95> mathb10 <12> <14.4> <17.28> <20.74> <24.88> mathb12
      }{}
\DeclareSymbolFont{mathb}{U}{mathb}{m}{n}
\DeclareFontSubstitution{U}{mathb}{m}{n}
\DeclareMathSymbol{\dotplus}       {2}{mathb}{"00}
\DeclareMathSymbol{\dotdiv}        {2}{mathb}{"01}
\DeclareMathSymbol{\dottimes}      {2}{mathb}{"02}
\DeclareMathSymbol{\divdot}        {2}{mathb}{"03}
\DeclareMathSymbol{\udot}          {2}{mathb}{"04}
\DeclareMathSymbol{\square}        {2}{mathb}{"05}
\DeclareMathSymbol{\Asterisk}      {2}{mathb}{"06}
\DeclareMathSymbol{\bigast}        {1}{mathb}{"06}
\DeclareMathSymbol{\coAsterisk}    {2}{mathb}{"07}
\DeclareMathSymbol{\bigcoast}      {1}{mathb}{"07}
\DeclareMathSymbol{\circplus}      {2}{mathb}{"08}
\DeclareMathSymbol{\pluscirc}      {2}{mathb}{"09}
\DeclareMathSymbol{\convolution}   {2}{mathb}{"0A}
\DeclareMathSymbol{\divideontimes} {2}{mathb}{"0B}
\DeclareMathSymbol{\blackdiamond}  {2}{mathb}{"0C}
\DeclareMathSymbol{\sqbullet}      {2}{mathb}{"0D}
\DeclareMathSymbol{\bigstar}       {2}{mathb}{"0E}
\DeclareMathSymbol{\bigvarstar}    {2}{mathb}{"0F}
\DeclareMathSymbol{\corresponds}   {3}{mathb}{"1D}
\DeclareMathSymbol{\updownarrows}          {3}{mathb}{"D6}
\DeclareMathSymbol{\downuparrows}          {3}{mathb}{"D7}
\DeclareMathSymbol{\Lsh}                   {3}{mathb}{"E8}
\DeclareMathSymbol{\Rsh}                   {3}{mathb}{"E9}
\DeclareMathSymbol{\dlsh}                  {3}{mathb}{"EA}
\DeclareMathSymbol{\drsh}                  {3}{mathb}{"EB}
\DeclareMathSymbol{\looparrowdownleft}     {3}{mathb}{"EE}
\DeclareMathSymbol{\looparrowdownright}    {3}{mathb}{"EF}
\DeclareMathSymbol{\curvearrowleftright}   {3}{mathb}{"F2}
\DeclareMathSymbol{\curvearrowbotleft}     {3}{mathb}{"F3}
\DeclareMathSymbol{\curvearrowbotright}    {3}{mathb}{"F4}
\DeclareMathSymbol{\curvearrowbotleftright}{3}{mathb}{"F5}
\DeclareMathSymbol{\leftsquigarrow}        {3}{mathb}{"F8}
\DeclareMathSymbol{\rightsquigarrow}       {3}{mathb}{"F9}
\DeclareMathSymbol{\leftrightsquigarrow}   {3}{mathb}{"FA}
\DeclareMathSymbol{\lefttorightarrow}      {3}{mathb}{"FC}
\DeclareMathSymbol{\righttoleftarrow}      {3}{mathb}{"FD}
\DeclareMathSymbol{\uptodownarrow}         {3}{mathb}{"FE}
\DeclareMathSymbol{\downtouparrow}         {3}{mathb}{"FF}
\DeclareMathSymbol{\varhash}       {0}{mathb}{"23}


\DeclareMathOperator\colim{colim}



\mdef\we{\overset{\sim}{\longrightarrow}}
\mdef\leftwe{\overset{\sim}{\longleftarrow}}

\let\fib\twoheadrightarrow

\def\acof{\mathrel{\mathrlap{\hspace{3pt}\raisebox{4pt}{$\scriptscriptstyle\sim$}}\mathord{\rightarrowtail}}}



\def\rightarrowtailfill@{\arrowfill@{\Yright\joinrel\relbar}\relbar\rightarrow}
\newcommand\xrightarrowtail[2][]{\ext@arrow 0055{\rightarrowtailfill@}{#1}{#2}}

\def\twoheadrightarrowfill@{\arrowfill@{\relbar\joinrel\relbar}\relbar\twoheadrightarrow}
\newcommand\xtwoheadrightarrow[2][]{\ext@arrow 0055{\twoheadrightarrowfill@}{#1}{#2}}


\def\slashedarrowfill@#1#2#3#4#5{%
  $\m@th\thickmuskip0mu\medmuskip\thickmuskip\thinmuskip\thickmuskip
   \relax#5#1\mkern-7mu%
   \cleaders\hbox{$#5\mkern-2mu#2\mkern-2mu$}\hfill
   \mathclap{#3}\mathclap{#2}%
   \cleaders\hbox{$#5\mkern-2mu#2\mkern-2mu$}\hfill
   \mkern-7mu#4$%
}
\def\rightslashedarrowfill@{%
  \slashedarrowfill@\relbar\relbar\mapstochar\rightarrow}
\newcommand\xslashedrightarrow[2][]{%
  \ext@arrow 0055{\rightslashedarrowfill@}{#1}{#2}}
\mdef\hto{\xslashedrightarrow{}}
\mdef\htoo{\xslashedrightarrow{\quad}}





\def\jd#1{\@jd#1\ej}
\def\@jd#1|-#2\ej{\@@jd#1,,\;\vdash\;\left(#2\right)}
\def\@@jd#1,{\@ifmtarg{#1}{\let\next=\relax}{\left(#1\right)\let\next=\@@@jd}\next}
\def\@@@jd#1,{\@ifmtarg{#1}{\let\next=\relax}{,\,\left(#1\right)\let\next=\@@@jd}\next}
\def\jdm#1{\@jdm#1\ej}
\def\@jdm#1|-#2\ej{\@@jd#1,,\\\vdash\;\left(#2\right)}

\long\def\my@drawfill#1#2;{%
\@skipfalse
\fill[#1,draw=none] #2;
\@skiptrue
\draw[#1,fill=none] #2;
}
\newif\if@skip
\newcommand{\skipit}[1]{\if@skip\else#1\fi}
\newcommand{\drawfill}[1][]{\my@drawfill{#1}}

\newcounter{nodemaker}
\setcounter{nodemaker}{0}


\newif\ifhyperref
\@ifpackageloaded{hyperref}{\hyperreftrue}{\hyperreffalse}
\iftac
  \let\your@state\state
  \def\state#1{\my@state#1}
  \def\my@state#1.{\gdef\currthmtype{#1}\your@state{#1.}}
  \let\your@staterm\staterm
  \def\staterm#1{\my@staterm#1}
  \def\my@staterm#1.{\gdef\currthmtype{#1}\your@staterm{#1.}}
  \let\@defthm\newtheorem
  \def\switchtotheoremrm{\let\@defthm\newtheoremrm}
  \def\defthm#1#2#3{\@defthm{#1}{#2}} 
  \let\your@section\section
  \def\section{\gdef\currthmtype{section}\your@section}
  \def\currthmtype{}
  \ifhyperref
    \def\autoref#1{\ref*{label@name@#1}~\ref{#1}}
  \else
    \def\autoref#1{\ref{label@name@#1}~\ref{#1}}
  \fi
  \AtBeginDocument{%
    \let\old@label\label%
    \def\label#1{%
      {\let\your@currentlabel\@currentlabel%
        \edef\@currentlabel{\currthmtype}%
        \old@label{label@name@#1}}%
      \old@label{#1}}
  }
  \let\cref\autoref
\else\ifcref
  \def\defthm#1#2#3{%
    \newaliascnt{#1}{thm}
    \newtheorem{#1}[#1]{#2}
    \aliascntresetthe{#1}
    \crefname{#1}{#2}{#3}
  }
\else
  \ifhyperref
    \def\defthm#1#2#3{
      \newtheorem{#1}{#2}[section]%
      \expandafter\def\csname #1autorefname\endcsname{#2}%
      \expandafter\let\csname c@#1\endcsname\c@thm}
  \else
    \def\defthm#1#2#3{\newtheorem{#1}[thm]{#2}} 
    \ifx\SK@label\undefined\let\SK@label\label\fi
    \let\old@label\label
    \let\your@thm\@thm
    \def\@thm#1#2#3{\gdef\currthmtype{#3}\your@thm{#1}{#2}{#3}}
    \def\currthmtype{}
    \def\label#1{{\let\your@currentlabel\@currentlabel\def\@currentlabel%
        {\currthmtype~\your@currentlabel}%
        \SK@label{#1@}}\old@label{#1}}
    \def\autoref#1{\ref{#1@}}
  \fi
  \let\cref\autoref
\fi\fi

\newtheorem{thm}{Theorem}[section]
\ifcref
  \crefname{thm}{Theorem}{Theorems}
\else
  
\fi
\defthm{cor}{Corollary}{Corollaries}
\defthm{prop}{Proposition}{Propositions}
\defthm{lem}{Lemma}{Lemmas}
\defthm{sch}{Scholium}{Scholia}
\defthm{assume}{Assumption}{Assumptions}
\defthm{claim}{Claim}{Claims}
\defthm{conj}{Conjecture}{Conjectures}
\defthm{hyp}{Hypothesis}{Hypotheses}
\iftac\switchtotheoremrm\else\theoremstyle{definition}\fi
\defthm{defn}{Definition}{Definitions}
\defthm{notn}{Notation}{Notations}
\iftac\switchtotheoremrm\else\theoremstyle{remark}\fi
\defthm{rmk}{Remark}{Remarks}
\defthm{eg}{Example}{Examples}
\defthm{egs}{Examples}{Examples}
\defthm{ex}{Exercise}{Exercises}
\defthm{ceg}{Counterexample}{Counterexamples}

\ifcref
  \crefformat{section}{\S#2#1#3}
  \Crefformat{section}{Section~#2#1#3}
  \crefrangeformat{section}{\S\S#3#1#4--#5#2#6}
  \Crefrangeformat{section}{Sections~#3#1#4--#5#2#6}
  \crefmultiformat{section}{\S\S#2#1#3}{ and~#2#1#3}{, #2#1#3}{ and~#2#1#3}
  \Crefmultiformat{section}{Sections~#2#1#3}{ and~#2#1#3}{, #2#1#3}{ and~#2#1#3}
  \crefrangemultiformat{section}{\S\S#3#1#4--#5#2#6}{ and~#3#1#4--#5#2#6}{, #3#1#4--#5#2#6}{ and~#3#1#4--#5#2#6}
  \Crefrangemultiformat{section}{Sections~#3#1#4--#5#2#6}{ and~#3#1#4--#5#2#6}{, #3#1#4--#5#2#6}{ and~#3#1#4--#5#2#6}
  \crefformat{appendix}{Appendix~#2#1#3}
  \Crefformat{appendix}{Appendix~#2#1#3}
  \crefrangeformat{appendix}{Appendices~#3#1#4--#5#2#6}
  \Crefrangeformat{appendix}{Appendices~#3#1#4--#5#2#6}
  \crefmultiformat{appendix}{Appendices~#2#1#3}{ and~#2#1#3}{, #2#1#3}{ and~#2#1#3}
  \Crefmultiformat{appendix}{Appendices~#2#1#3}{ and~#2#1#3}{, #2#1#3}{ and~#2#1#3}
  \crefrangemultiformat{appendix}{Appendices~#3#1#4--#5#2#6}{ and~#3#1#4--#5#2#6}{, #3#1#4--#5#2#6}{ and~#3#1#4--#5#2#6}
  \Crefrangemultiformat{appendix}{Appendices~#3#1#4--#5#2#6}{ and~#3#1#4--#5#2#6}{, #3#1#4--#5#2#6}{ and~#3#1#4--#5#2#6}
  \crefformat{subappendix}{\S#2#1#3}
  \Crefformat{subappendix}{Section~#2#1#3}
  \crefrangeformat{subappendix}{\S\S#3#1#4--#5#2#6}
  \Crefrangeformat{subappendix}{Sections~#3#1#4--#5#2#6}
  \crefmultiformat{subappendix}{\S\S#2#1#3}{ and~#2#1#3}{, #2#1#3}{ and~#2#1#3}
  \Crefmultiformat{subappendix}{Sections~#2#1#3}{ and~#2#1#3}{, #2#1#3}{ and~#2#1#3}
  \crefrangemultiformat{subappendix}{\S\S#3#1#4--#5#2#6}{ and~#3#1#4--#5#2#6}{, #3#1#4--#5#2#6}{ and~#3#1#4--#5#2#6}
  \Crefrangemultiformat{subappendix}{Sections~#3#1#4--#5#2#6}{ and~#3#1#4--#5#2#6}{, #3#1#4--#5#2#6}{ and~#3#1#4--#5#2#6}
  \crefname{part}{Part}{Parts}
  \crefname{figure}{Figure}{Figures}
\fi

\iftac
  
  \let\your@endproof\endproof
  \def\my@endproof{\your@endproof}
  \def\endproof{\my@endproof\gdef\my@endproof{\your@endproof}}
  \def\qedhere{\tag*{\endproofbox}\gdef\my@endproof{\relax}}
\fi

\iftac
  \def\pr@@f[#1]{\subsubsection*{\sc #1.}}
\fi

\def\thmqedhere{\expandafter\csname\csname @currenvir\endcsname @qed\endcsname}

\ifbeamer\else

\fi

\ifbeamer\else
  \setitemize[1]{leftmargin=2em}
  \setenumerate[1]{leftmargin=*}
\fi

\iftac
  \let\c@equation\c@subsection
\else
  \let\c@equation\c@thm
\fi
\numberwithin{equation}{section}

\ifcref\else
  \@ifpackageloaded{mathtools}{\mathtoolsset{showonlyrefs,showmanualtags}}{}
\fi

\alwaysmath{alpha}
\alwaysmath{beta}
\alwaysmath{gamma}
\alwaysmath{Gamma}
\alwaysmath{delta}
\alwaysmath{Delta}
\alwaysmath{epsilon}
\mdef\ep{\varepsilon}
\alwaysmath{zeta}
\alwaysmath{eta}
\alwaysmath{theta}
\alwaysmath{Theta}
\alwaysmath{iota}
\alwaysmath{kappa}
\alwaysmath{lambda}
\alwaysmath{Lambda}
\alwaysmath{mu}
\alwaysmath{nu}
\alwaysmath{xi}
\alwaysmath{pi}
\alwaysmath{rho}
\alwaysmath{sigma}
\alwaysmath{Sigma}
\alwaysmath{tau}
\alwaysmath{upsilon}
\alwaysmath{Upsilon}
\alwaysmath{phi}
\alwaysmath{Pi}
\alwaysmath{Phi}
\mdef\ph{\varphi}
\alwaysmath{chi}
\alwaysmath{psi}
\alwaysmath{Psi}
\alwaysmath{omega}
\alwaysmath{Omega}
\let\al\alpha
\let\be\beta
\let\gm\gamma

\let\de\delta

%

\makeatother


\usepackage{hyperref}
\usepackage[all]{xy}
\usepackage{graphicx}
\usepackage{amsaddr}
\title{On the inadequacy of the projective structure with respect to the Univalence Axiom}
\author{Anthony Bordg}
\address{University of Cambridge, Department of Computer Science and Technology, William Gates Building, 15 JJ Thomson Ave, Cambridge CB3 0FD, UK \\ E-mail address: apdb3@cam.ac.uk \\ Phone: (+44)/(0) 1223 763741}
\thanks{This material is based upon work supported by the European Research Council Advanced Grant ALEXANDRIA (Project 742178) and by grant GA CR P201/12/G028 from the Czech Science Foundation.}

\newcommand{\ffive}[5]{\begin{array}{rrcl} #1: & #2&\longrightarrow &#3\\ &#4&\longmapsto &#5  \end{array}}
\newcommand{\ffour}[4]{\begin{array}{rcl} #1&\longrightarrow &#2\\ #3&\longmapsto &#4 \end{array}}
\newcommand{\fsix}[6]{\begin{array}{rcl} #1&\longrightarrow &#2\\ #3&\longmapsto &#4\\ #5&\longmapsto &#6\\\\ \end{array}} 

\newcommand{\pushoutcorner}[1][dr]{\save*!/#1+1.2pc/#1:(1,-1)@^{|-}\restore}

\newdir{ >}{{}*!/-5pt/\dir{>}}
\def\pcomd{\ar@{}[d]|{\circlearrowright}}
\def\ipcom{\ar@{}[rd]|{\circlearrowleft}}
\def\ipcomul{\ar@{}[lu]|{\circlearrowleft}}
\def\pcom{\ar@{}[rd]|{\circlearrowright}}
\def\pcomul{\ar@{}[lu]|{\circlearrowright}}
\def\pcomu{\ar@{}[ru]|{\circlearrowright}}  


\def\Gpd{\mathbf{Gpd}}
\def\GGpd{\mathbf{Gpd}^{C_2}}
\def\G{\mathbf{B}(C_2)}
\def\Z2{C_2}
\def\f{_{\mathbf{f}}}
\def\0{\mathbf{0}}
\def\1{\mathbf{1}}

\mdef\el{\mathsf{El}}

\begin{document}

\begin{abstract}
	In this article the author endows the functor category $[\mathbf{B}(C_2),\Gpd]$ with the structure of a type-theoretic fibration category with a universe using the projective fibrations. It offers a new model of Martin-L\"of type theory with dependent sums, dependent products, identity types and a universe. It turns out that this universe, the natural candidate that lifts the univalent universe of small discrete groupoids in the groupoid model of Hofmann and Streicher, is not univalent. 
\end{abstract}

\maketitle

\section{Introduction}
\label{sec:introduction}

In the seventies Per Martin-L\"of set a framework out, suitable for constructive mathematics, called \emph{Martin-L\"of Type Theory} (MLTT for short). It is well known that MLTT enjoys very nice computational properties that make it suitable for the formalization of mathematics with a proof assistant. Recently Vladimir Voevodsky added an axiom to MLTT, the so-called \emph{Univalence Axiom} (UA for short). Given a type-theoretic universe, UA roughly asserts an equivalence between the identity type of any two small types (\textit{i.e.} two elements of the universe) and the type of weak equivalences between them. This new framework, MLTT together with UA, was coined \emph{Univalent Foundations} (UF for short). \\
Voevodsky found an interpretation of UF in the category of simplicial sets using Kan simplicial sets, where the universe is interpreted as the base of a universal Kan fibration (\textit{cf.} \cite{klv:ssetmodel} for details). Through the notion of a \emph{type-theoretic fibration category}, models of UF were later pursued by Michael Shulman \cite{shulman:invdia, shulman:elreedy, shulman:eiuniv}. The line of research initiated by Michael Shulman consists in the exploration of the stability of UA, in particular in the following sense : given a type-theoretic fibration category $\mathscr{C}$ together with a univalent universe, one wants to lift this type-theoretic fibration category with its univalent universe to the functor category $[\mathcal{D},\mathscr{C}]$, where the index category $\mathcal{D}$ is a small category. This goal was achieved in some specific cases.\\
First, in \cite{shulman:invdia} Shulman succeeded when $\mathcal{D}$ is an \emph{inverse category} by using the so-called \emph{Reedy model structure} on the functor category. Second, in \cite{shulman:elreedy} Shulman succeeded with the same model structure when $\mathscr{C}$ is the category $\mathbf{sSet}$ of simplicial sets and $\mathcal{D}$ is any \emph{elegant Reedy category}. Note that inverse categories are particular cases of elegant Reedy categories that are themselves particular cases of (strict) \emph{Reedy categories}. Since Reedy categories do not allow non-trivial isomorphisms, this kind of index category has severe constraints. The difficulty in handling non-trivial isomorphisms in the index category seems a challenge to the usefulness of the Reedy model structure with respect to the stability of UA. Around the same time Shulman in \cite{shulman:eiuniv} and the author in his PhD thesis \cite{bordg:thesis} tried different alternative model structures. \\
In particular, the author explored the possibility of using the so-called \emph{projective model structure} to endow a functor category with the structure of a type-theoretic fibration category with a univalent universe. The projective model structure is an attractive candidate, since fibrations, the class of maps interpreting dependent types in a type-theoretic fibration category, are simply defined objectwise. Starting from the groupoid model \cite{hs:gpd-typethy} of Hofmann and Streicher with its univalent universe of small discrete groupoids, we treated the 2-dimensional case where $\mathscr{C}$ is the category $\Gpd$ of groupoids and $\mathcal{D}$ is $\mathbf{B}(C_2)$, namely the groupoid associated with the group with two elements that presents in this context the interesting technical challenge of containing a non-trivial automorphism. We discovered that the projective fibrations allow one to endow the functor category $[\mathbf{B}(C_2), \Gpd]$ with the structure of a type-theoretic fibration category with a universe. But, while this universe is (according to \ref{lem:4.4}) the natural universe that lifts, with respect to the projective setting, the univalent universe of small discrete groupoids in $\Gpd$, it turns out this universe is not univalent. Even a weaker form of UA, namely function extensionality, does not hold in this new type-theoretic fibration category.

\subsection*{Acknowledgments}

I would like to thank Andr\'e Hirschowitz, Peter LeFanu Lumsdaine, Michael Shulman for helpful discussions and the anonymous referees for many helpful suggestions.

\section{The projective model structure on $\GGpd$ made explicit}
\label{sec:pms}

We will denote the functor category $[\mathbf{B}(C_2), \Gpd]$ simply by $\GGpd$. The reader should note that an object in $\GGpd$ is nothing but a groupoid equipped with an involution, and a morphism in $\GGpd$ is nothing but an equivariant functor, namely a functor between groupoids compatible with the involutions on the domain and codomain. Such a groupoid will be denoted by a capital letter, $A$ for instance, and the corresponding Greek letter $\al$ will be used to refer to its involution (except when stated otherwise). \\
We recall that for the natural model structure on $\Gpd$ \cite{rezk:folk, strickland:localduality}, the weak equivalences are the equivalences of categories, the fibrations are the isofibrations, and a functor is a cofibration if it is injective on objects.
Since the natural model structure on $\Gpd$ is cofibrantly generated and $\mathbf{B}(C_2)$ is a small category, there exists the projective model structure \cite[proposition A.2.8.2]{lurie:higher-topoi} on $\GGpd$. Hereinafter by an objectwise weak equivalence (\textit{resp.} an objectwise fibration) one means a map whose underlying map of groupoids is a weak equivalence (\textit{resp.} a fibration) in $\Gpd$.\\
Recall that one can describe this projective model structure by:
\begin{itemize} 
	\item Weak equivalences are the objectwise weak equivalences.
	\item Fibrations are the objectwise fibrations.
	\item Cofibrations are those maps with the left lifting property with respect to acyclic fibrations (fibrations which are simultaneously weak equivalences).
\end{itemize}

\begin{notn}
\label{notn:2.1}	
As usual in category theory, the initial object and the terminal object of $\Gpd$ will be denoted by $\0$ and $\1$ respectively. We will use the letter $\bI$ for the groupoid with two distinct points and one isomorphism $\phi$ between them. We denote by $i$ the inclusion $i: \1 \hookrightarrow \bI$. \\
We have an obvious functor from $\G$ to $\1$ and an obvious inclusion from $\1$ to $\G$. These two functors induce by precomposition the two following functors,
$$\ffive{\_}{\GGpd}{\Gpd}{G}{\underline{G}}$$
namely the forgetful functor that maps a groupoid $G$ equipped with an involution to its underlying groupoid;
$$\ffive{!}{\Gpd}{\GGpd}{G}{G!} $$
that maps a groupoid to the same groupoid together with the identity involution. \\
The forgetful functor has a left adjoint denoted $S$ that maps a groupoid $G$ to $S(G)\coloneqq G\textstyle\coprod G$ together with the involution that swaps the two copies of $G$. \\
The functor $!$ has a right adjoint, namely the  fixed-points functor, 
$$\ffive{()^{\Z2}}{\GGpd}{\Gpd}{G}{G^{\Z2}}$$ where $G^{\Z2}$ is the subgroupoid of $G$ of fixed points and fixed morphisms under the $\Z2$-action. Note that $G^{\Z2}$ is $\lim G$.\\
Since limits and colimits are pointwise in a presheaf category, $\0!$ and $\1!$ (shortened to $\0$ and $\1$ when no confusion is possible) are the corresponding initial and terminal objects in $\GGpd$. \\
Given a groupoid $G$ together with an involution, we will denote by $G\f$ the full subgroupoid of $G$ consisting of its fixed points, {\em i.e.} the objects are the fixed points, but we take all (not necessarily fixed) morphisms between them. \\
Last, given a morphism of groupoids $f:A\rightarrow B$ and $x\in B$ , $f^{-1}\lbrace{x}\rbrace$ will denote the subgroupoid of $A$ whose objects are objects of $A$ above $x$ and morphisms are morphisms of $A$ above the identity $1_x$.  
\end{notn}

Knowing the generating acyclic cofibrations in $\Gpd$, by looking at the construction of the projective model structure one finds the generating acyclic cofibrations with respect to the projective model structure on $\GGpd$ \cite[proof of proposition A.2.8.2]{lurie:higher-topoi}. Indeed, a set (actually it is a singleton in that case) of generating acyclic cofibrations is given by the following inclusion: $$S(i): S(\1)\hookrightarrow S(\bI)\;.$$

\begin{prop}
\label{prop:2.2}
	Let $f:A\rightarrow B$ be a morphism in $\GGpd$. The following are equivalent: 
	\begin{enumerate}[label=(\roman*)]
		\item $f$ is an acyclic cofibration.
		\item $\underline{f}$ is an acyclic cofibration and induces a bijection between the set of fixed points of $A$ and the set of fixed points of $B$.
		\item $\underline{f}$ is an acyclic cofibration and induces an isomorphism between $A^{\Z2}$ and $B^{\Z2}$.
		\item $\underline{f}$ is an acyclic cofibration and induces an isomorphism between $A\f$ and $B\f$. 
	\end{enumerate}
\end{prop}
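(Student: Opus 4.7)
The plan is to establish the cycle (i) $\Rightarrow$ (ii), the equivalences (ii) $\Leftrightarrow$ (iii) $\Leftrightarrow$ (iv) under the hypothesis that $\underline{f}$ is an acyclic cofibration, and finally (iv) $\Rightarrow$ (i). The first implication and the three equivalences are formal; the real content lies in the last.

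\emph{For (i) $\Rightarrow$ (ii):} by cofibrant generation of the projective model structure, any acyclic cofibration $f$ is a retract of a transfinite composition of pushouts of the generator $S(i) : S(\1) \hookrightarrow S(\bI)$. Neither $S(\1)$ nor $S(\bI)$ has any fixed points, so each such pushout preserves the fixed-point set of its domain. Moreover, the forgetful functor $\_$ preserves colimits (it has a right adjoint $X \mapsto X \times X$ with the swap), so $\underline{f}$ is a retract of a transfinite composition of pushouts of $\underline{S(i)} = i \sqcup i$, itself an acyclic cofibration in $\Gpd$. Both properties pass to retracts. \emph{For the equivalences} among (ii)--(iv), note that $A^{\Z2}$ and $A\f$ both have object set the fixed points of $A$, so (iii) or (iv) immediately yield the object bijection of (ii). Conversely, since $\underline{f}$ is a groupoid equivalence and hence fully faithful, the restriction $A^{\Z2} \to B^{\Z2}$ is fully faithful---a $\beta$-fixed $h : f(a) \to f(a')$ lifts uniquely to some $g$ with $f(g) = h$, and $f(\alpha(g)) = \beta(f(g)) = \beta(h) = h = f(g)$ forces $\alpha(g) = g$ by faithfulness---and bijective on objects, hence an isomorphism. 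The argument for $A\f \to B\f$ is simpler, since full faithfulness restricts directly to full subgroupoids.

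\emph{For (iv) $\Rightarrow$ (i),} I construct $B$ from $A$ as a transfinite composition of pushouts of $S(i)$. Well-order the $\Z2$-orbits of objects of $B$ disjoint from $f(A)$; by (ii) each is a pair $\{b, \beta(b)\}$ of non-fixed objects, and by essential surjectivity of $\underline{f}$ there exist $a \in A$ and an isomorphism $\phi_b : f(a) \to b$ in $\underline{B}$. Attach this orbit by the pushout of $S(i)$ along the equivariant map $S(\1) \to A$ sending the two components of $\1 \sqcup \1$ to $a$ and $\alpha(a)$ respectively (these coincide precisely when $a$ is fixed). Iterating transfinitely produces $A_\infty$ together with a canonical equivariant extension $A_\infty \to B$ of $f$, which sends each freely adjoined isomorphism $f(a) \to b$ to $\phi_b$. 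The \emph{main obstacle} is to verify that $A_\infty \to B$ is an isomorphism of $\Z2$-groupoids. Bijectivity on objects is immediate. For hom-sets, decompose by whether endpoints are old (in $f(A)$) or new: condition (iv) ensures $A\f \to B\f$ captures every morphism among fixed points of $B$, full faithfulness of $\underline{f}$ handles morphisms among old non-fixed points, and the $\phi_b$'s together with full faithfulness canonically identify every remaining hom-set in $B$ with the freely generated hom-set in $A_\infty$. Once $A_\infty \xrightarrow{\sim} B$ is established, $f$ appears as a transfinite composition of pushouts of $S(i)$, hence as an acyclic cofibration.
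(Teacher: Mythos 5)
Your proposal is correct and follows essentially the same route as the paper: (i)\;$\Rightarrow$\;(ii) by checking the fixed-point property on the generating acyclic cofibration $S(i)$ and its stability under pushouts, transfinite composition and retracts; the middle equivalences by full faithfulness of $\underline{f}$; and (iv)\;$\Rightarrow$\;(i) by exhibiting $f$ as a transfinite composition of pushouts of $S(i)$ indexed by a well-ordering of the $\Z2$-orbits of the new objects, attached via isomorphisms supplied by essential surjectivity. The only cosmetic difference is that you build an abstract cell complex $A_\infty$ and then compare it with $B$, whereas the paper first replaces $f$ by a full subgroupoid inclusion and realizes each stage directly inside $B$; the verification needed is the same in both cases.
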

\begin{proof}
\label{proof:prop:2.2}
	We prove successively $(i)\Rightarrow (ii)$, $(ii)\Rightarrow (iii)$, $(iii)\Rightarrow (iv)$ and $(iv)\Rightarrow (i)$.
	\begin{itemize}
		\item $(i)\Rightarrow (ii)\colon$ assume that $f$ is an acyclic cofibration. It is well-known that $\underline{f}$ is an acyclic cofibration \cite[proposition 11.6.2]{hirschhorn:modelcats}. Moreover, note that if $x$ is a fixed point of $A$, then one has $f(x) = f(\al(x)) = \be(f(x))$. Hence, $f(x)$ is a fixed point of $B$. We also know that $f$ is injective on objects as a cofibration between groupoids. We need to prove that any fixed point in $B$ is the image of a fixed point in $A$. To achieve this, the reader can check this fact for the generating acyclic cofibration $S(i)$ and the stability of this fact under pushouts, transfinite compositions and retractions. One concludes that $f$ induces a bijection between the fixed points in $A$ and the fixed points in $B$.
		\item $(ii)\Rightarrow (iii)\colon$ it is a straighforward consequence of $f$ being fully faithful.
		\item $(iii)\Rightarrow (iv)\colon$ \textit{idem}.
		\item $(iv)\Rightarrow (i)\colon$ Since $\underline{f}$ is an acyclic cofibration of groupoids where $A\f$ and $B\f$ are isomorphic, $\underline{f}$ is isomorphic to the inclusion of a full subgroupoid of $B$ equivalent to $B$ where $A\f$ and $B\f$ are equal. Let $((\text{Ob}B\setminus\text{Ob}A)/\Z2, \leq)$ be the set of orbits of $(\text{Ob}B\setminus\text{Ob}A)$ under the $\Z2$-action together with a well-ordering. Let $\lambda$ be the order type of this well-ordering and $g\colon (\text{Ob}B\setminus\text{Ob}A)/\Z2 \rightarrow \lambda$ an order-preserving bijection. We will construct a $\lambda$-sequence $X$ of pushouts of $S(i)$, where we add the elements of $(\text{Ob}B\setminus\text{Ob}A)$ to $A$ by following our well-ordering. Take $X_0 \coloneqq A$. For $\gm$ such that $\gm + 1 <\lambda$, $X_{\gm + 1}$ is defined as the following pushout. Let $s$ be the element of $(\text{Ob}B\setminus\text{Ob}A)/\Z2$ that corresponds to $\gm + 1$ under $g$. Actually, $s$ is a set with two distinct elements $\lbrace x,\be(x)\rbrace$. Since $f$ is essentially surjective, there exists an isomorphism $\varphi\colon y\rightarrow x$ with $y\in A$, and we make the following pushout
		$$\xymatrix{S(\1) \ar[r]^l\ar@{^{(}->}[d]_{S(i)} & X_{\gm} \ar[d] \\
			S(\bI) \ar[r] & X_{\gm + 1}\;,\pushoutcorner}$$
		where $l$ maps the two objects of $S(1)$ to $y$ and $\al(y)$, respectively. Last, if $\gm < \lambda$ is a limit ordinal, then $X_{\gm}$ is $\underset{\delta < \gm}{\colim}\,X_{\delta}$. For every $\gm <\lambda$, $X_{\gm}$ is a full subgroupoid of $B$ stable under the involution $\be$, and $f$ is the transfinite composition of the $\lambda$-sequence $X$. So, $f$ is an acyclic cofibration.
	\end{itemize}
\end{proof}

\begin{prop}
\label{prop:2.3}
	Let $G$ be a groupoid equipped with an involution and $G'$ a subgroupoid of $G$ stable under that involution such that $G'\f = G\f$ and the inclusion map from $G'$ to $G$ is an equivalence of groupoids. Then the inclusion map is an acyclic cofibration.
\end{prop}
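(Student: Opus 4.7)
My plan is to reduce the statement to the already-established characterization $(iv)\Rightarrow(i)$ in Proposition 2.2. Write $\iota\colon G' \hookrightarrow G$ for the inclusion. Because $G'$ is assumed stable under the involution of $G$, that involution restricts to an involution on $G'$ and $\iota$ intertwines the two, so $\iota$ is a well-defined morphism in $\GGpd$; this is the only bookkeeping step, and it is essentially formal.

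Next I would verify the two clauses of $(iv)$. For the first, the underlying functor $\underline{\iota}$ is a weak equivalence in $\Gpd$ by hypothesis, and it is a cofibration because it is injective on objects, which in the natural model structure on $\Gpd$ is the standard description of cofibrations (as used in \cite{rezk:folk, strickland:localduality}). For the second, the induced map $G'\f \to G\f$ is by construction the inclusion of the fixed-point subgroupoid of $G'$ into that of $G$, and the hypothesis $G'\f = G\f$ says this inclusion is the identity, hence an isomorphism.

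With $(iv)$ in hand, the implication $(iv)\Rightarrow(i)$ of Proposition 2.2 applied to $\iota$ yields at once that $\iota$ is an acyclic cofibration in $\GGpd$, which is the desired conclusion. I do not expect a real obstacle here: the content of the proposition is almost entirely absorbed into Proposition 2.2, and the only small point that needs to be kept straight is that the fixed-point functor $(-)\f$ preserves subobjects, so that $G' \subseteq G$ really does induce $G'\f \subseteq G\f$ and the identification $G'\f = G\f$ from the hypothesis can be read as the required isomorphism.
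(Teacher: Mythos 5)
Your proposal is correct and follows the same route as the paper: the paper's own proof likewise observes that the inclusion is an objectwise acyclic cofibration (an injective-on-objects equivalence of groupoids) satisfying $G'\f = G\f$, and then invokes the implication $(iv)\Rightarrow(i)$ of Proposition~\ref{prop:2.2}. Your write-up simply makes explicit the bookkeeping that the paper leaves as ``straightforward.''
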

\begin{proof}
\label{proof:prop:2.3}	
	Since by assumption the inclusion is an objectwise acyclic cofibration and $G'\f = G\f$, it is straighforward by \ref{prop:2.2}.
\end{proof}

\section{$\GGpd$ as a type-theoretic fibration category}
\label{sec:ttfc}

We recall below the definition of a type-theoretic fibration category \cite[Definition 7.1]{shulman:eiuniv}.

\begin{defn}
\label{def:ttfc}
	A \textbf{type-theoretic fibration category} is a category $\mathscr{C}$ with:
	\begin{enumerate}[leftmargin=*,label=(\arabic*)]
		\item A terminal object 1.\label{item:cat1}
		\item A subcategory of \textbf{fibrations} containing all the isomorphisms and all the morphisms with codomain 1. A morphism is called an \textbf{acyclic cofibration} if it has the left lifting property with respect to all fibrations.\label{item:cat2}
		\item All pullbacks of fibrations exist and are fibrations.\label{item:cat3}
		\item For every fibration $g: A\rightarrow B$,
		the pullback functor $g^*: \mathscr{C}/B \to \mathscr{C}/A$ has a partial right adjoint $\Pi_g$, defined at all fibrations over $A$, and whose values are fibrations over $B$.
		This implies that acyclic cofibrations are stable under pullback along $g$.\label{item:cat4}
		\item Every morphism factors as an acyclic cofibration followed by a fibration.\label{item:cat5}
	\end{enumerate}
\end{defn}

\begin{rmk}
\label{rmk:modeltt}	
	A type-theoretic fibration category corresponds to the categorical structure necessary for interpreting a type theory with a unit type, dependent sums, dependent products, and intensional identity types.
\end{rmk}	

\begin{notn}
\label{notn:3.2}
In a type-theoretic fibration category we denote a fibration by a two-headed arrow $\fib$ and an acyclic cofibration by $\acof$.
\end{notn}	

\begin{lem}
\label{lem:rightpropernessGpd}
	In the natural model structure on $\Gpd$ acyclic cofibrations are stable by pullback along any fibration.
\end{lem}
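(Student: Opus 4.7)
The plan is to unpack the well-known characterization of the folk model structure on $\Gpd$ and then verify the two defining properties of an acyclic cofibration directly on the pullback. Recall that in this model structure, fibrations are the isofibrations (functors lifting isomorphisms with prescribed source or target), cofibrations are the functors injective on objects, weak equivalences are the equivalences of categories, and acyclic cofibrations are precisely the injective-on-objects equivalences of groupoids. I will consider a pullback square
\[
\begin{tikzcd}
P \ar[r, "\tilde{\imath}"] \ar[d, "\tilde{p}"'] & E \ar[d, "p"] \\
A \ar[r, hook, "i"'] & B
\end{tikzcd}
\]
with $i$ an acyclic cofibration and $p$ a fibration, and show that $\tilde{\imath}$ is an acyclic cofibration.

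First, since pullbacks in $\Gpd$ are computed pointwise (an object of $P$ is a pair $(a,e)$ with $i(a)=p(e)$, a morphism is a compatible pair $(f,g)$ with $i(f)=p(g)$), injectivity on objects of $i$ transfers immediately to $\tilde{\imath}$: indeed if $\tilde{\imath}(a,e)=\tilde{\imath}(a',e)$ then $i(a)=p(e)=i(a')$ forces $a=a'$. This disposes of the cofibration part. The remaining content is to prove that $\tilde{\imath}$ is a weak equivalence.

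For essential surjectivity, given $e\in E$, essential surjectivity of $i$ yields an object $a\in A$ and an isomorphism $\varphi\colon i(a)\xrightarrow{\sim} p(e)$ in $B$. Here one crucially invokes that $p$ is an isofibration: lift $\varphi$ to an isomorphism $\tilde{\varphi}\colon e'\xrightarrow{\sim} e$ in $E$ with $p(e')=i(a)$ and $p(\tilde{\varphi})=\varphi$. Then $(a,e')$ is a legitimate object of $P$ and $\tilde{\imath}(a,e')=e'\cong e$. For full faithfulness, a morphism in $P$ from $(a,e)$ to $(a',e')$ is by construction a pair $(f,g)$ with $i(f)=p(g)$, and $\tilde{\imath}$ sends it to $g$. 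Faithfulness of $i$ (a component of its being an equivalence) shows that $f$ is determined by $g$, so $\tilde{\imath}$ is faithful; fullness of $i$ then shows that any $g\colon e\to e'$ in $E$ admits some $f$ with $i(f)=p(g)$, so $\tilde{\imath}$ is full.

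The only delicate step is the essential surjectivity, and the key — and only — ingredient used beyond routine verifications is precisely the isofibration property of $p$, which is the whole reason one restricts to pullbacks along fibrations. Equivalently, this is the instance of right properness that holds because every object of $\Gpd$ is fibrant; the argument above merely spells this out. Combining cofibration (injective on objects) with weak equivalence (fully faithful and essentially surjective) gives that $\tilde{\imath}$ is an acyclic cofibration, as required.
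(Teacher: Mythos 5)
Your proof is correct and follows essentially the same route as the paper's: both arguments use that the pullback is computed pointwise, transfer injectivity on objects directly, establish essential surjectivity by lifting the comparison isomorphism through the isofibration, and check full faithfulness componentwise. No substantive difference.
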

\begin{proof}
\label{proof:lem:rightpropernessGpd}
All objects are fibrant, hence the natural model structure is right proper. Moreover, since the cofibrations are the functors that are injective on objects, they are stable under pullback along any morphism.	
\end{proof}

We prove the analogous result for $\GGpd$ with respect to the projective model structure.

\begin{lem}
\label{lem:rightpropernessGGpd}
	In the projective model structure on $\GGpd$ acyclic cofibrations are stable under pullback along any fibration.
\end{lem}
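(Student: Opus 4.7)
The plan is to reduce everything to the characterization given in Proposition~\ref{prop:2.2}, specifically the equivalence $(i)\Leftrightarrow(iii)$: a morphism in $\GGpd$ is an acyclic cofibration iff its underlying map of groupoids is an acyclic cofibration and it induces an isomorphism on the $\Z2$-fixed points. So given $g\colon A\fib B$ a projective fibration and $f\colon C\acof B$ an acyclic cofibration, I want to verify these two conditions for the pulled-back morphism $g^{*}f\colon A\times_{B}C\to A$.

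First, I would handle the underlying groupoid condition. The forgetful functor $\underline{(\blank)}\colon\GGpd\to\Gpd$ is a right adjoint (to $S$), hence preserves all limits, and in particular pullbacks. Thus the underlying map of $g^{*}f$ is exactly the pullback in $\Gpd$ of $\underline{f}$ along $\underline{g}$. By definition of the projective model structure, $\underline{g}$ is a fibration in $\Gpd$, and by Proposition~\ref{prop:2.2} $\underline{f}$ is an acyclic cofibration in $\Gpd$. So Lemma~\ref{lem:rightpropernessGpd} applies and the underlying morphism $\underline{g^{*}f}$ is an acyclic cofibration of groupoids.

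Second, I would check the fixed-point condition. The fixed-point functor $(\blank)^{\Z2}\colon\GGpd\to\Gpd$ is right adjoint to $!$, so it also preserves limits. Hence the canonical comparison map gives an isomorphism
$$(A\times_{B}C)^{\Z2}\;\iso\;A^{\Z2}\times_{B^{\Z2}}C^{\Z2},$$
under which $(g^{*}f)^{\Z2}$ is identified with the pullback of $f^{\Z2}\colon C^{\Z2}\to B^{\Z2}$ along $g^{\Z2}\colon A^{\Z2}\to B^{\Z2}$. Since $f$ is an acyclic cofibration in $\GGpd$, Proposition~\ref{prop:2.2}(iii) tells us that $f^{\Z2}$ is an isomorphism; pullbacks of isomorphisms being isomorphisms, $(g^{*}f)^{\Z2}$ is an isomorphism.

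Combining these two observations, $g^{*}f$ satisfies condition (iii) of Proposition~\ref{prop:2.2}, and is therefore an acyclic cofibration in $\GGpd$. The only subtle step is the second one, where one must know that fixed points commute with pullbacks; but this is immediate once one notices that $(\blank)^{\Z2}$ is a right adjoint, so there is no real obstacle and no explicit pointwise verification is required.
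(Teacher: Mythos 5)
Your proof is correct and follows essentially the same route as the paper's: both reduce the statement to the characterization in Proposition~\ref{prop:2.2} combined with Lemma~\ref{lem:rightpropernessGpd} for the underlying map of groupoids. The only difference is in the fixed-point step, where the paper checks surjectivity onto fixed points by a short direct element argument (a fixed point $x$ of $A$ maps to a fixed point $g(x)$ of $B$, which lifts uniquely to a fixed point of $C$, giving a fixed point of the pullback), while you instead invoke that $(\blank)^{\Z2}$, being a right adjoint, preserves pullbacks and that pullbacks of isomorphisms are isomorphisms; both verifications are valid.
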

\begin{proof}
\label{proof:lem:rightpropernessGGpd}	
This is an immediate consequence from \ref{lem:rightpropernessGpd} and \ref{prop:2.2} $(ii)$, since pullbacks and fixed points commute (both being limits).
\end{proof}

\begin{lem}
\label{lem:pullbackfunctor}
	The pullback functor along a fibration preserves acyclic cofibrations with respect to the projective model structure on $\GGpd$.
\end{lem}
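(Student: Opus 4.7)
The plan is to deduce this lemma directly from Lemma~\ref{lem:rightpropernessGGpd} by reinterpreting the pullback functor as a pullback along an auxiliary fibration. Fix a fibration $g: A \fib B$ in $\GGpd$ and an acyclic cofibration in the slice category $\GGpd/B$; unwinding the definition, this is an acyclic cofibration $f: C \acof D$ in $\GGpd$ together with structure maps $c_B: C \to B$ and $d_B: D \to B$ satisfying $d_B \circ f = c_B$. I need to show that the induced map $g^* f: A \times_B C \to A \times_B D$ is again an acyclic cofibration in $\GGpd$.

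First, I form the projection $\pi_D: A \times_B D \to D$, which is the pullback of $g$ along $d_B$. In the projective model structure on $\GGpd$, fibrations are defined objectwise and pullbacks in the functor category are computed objectwise; combined with the fact that fibrations in $\Gpd$ are stable under pullback (as in any model category), this entails that $\pi_D$ is again a fibration in $\GGpd$.

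Second, by the two-pullbacks lemma, the pullback of the acyclic cofibration $f: C \to D$ along the fibration $\pi_D: A \times_B D \to D$ is canonically identified with the map $g^* f: A \times_B C \to A \times_B D$: both represent the same functor of triples $(a, c)$ with $g(a) = c_B(c) = d_B(f(c))$. Applying Lemma~\ref{lem:rightpropernessGGpd} to $f$ and $\pi_D$ yields that $g^* f$ is an acyclic cofibration, which is precisely the claim.

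No serious obstacle arises: the content of this lemma is essentially a repackaging of Lemma~\ref{lem:rightpropernessGGpd}, now covering acyclic cofibrations between arbitrary objects over $B$ rather than only those whose codomain is $B$ itself. The only point deserving mild care is the identification of $g^* f$ with a pullback along $\pi_D$, which is a routine application of pullback pasting.
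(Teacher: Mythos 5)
Your proposal is correct and follows essentially the same route as the paper: both identify $g^*$ applied to the acyclic cofibration with the pullback of that acyclic cofibration along the projection $A\times_B D \to D$ (a fibration, being a pullback of $g$), and then invoke Lemma~\ref{lem:rightpropernessGGpd}. The only differences are notational.
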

\begin{proof}
\label{proof:lem:pullbackfunctor}	
	This follows directly from \ref{lem:rightpropernessGGpd}.
\end{proof}

\begin{thm}
\label{thm:rightadjoint}
	For every fibration $g\colon A \twoheadrightarrow B$ in $\GGpd$, the pullback functor 
	$$g^* \colon \GGpd/B \rightarrow \GGpd/A$$ 
	has a right adjoint $\Pi_g$, and $\Pi_g$ maps fibrations over $A$ to fibrations over $B$.
\end{thm}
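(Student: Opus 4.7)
The plan is to proceed in two stages: first construct the right adjoint $\Pi_g$ by general principles, then verify that it preserves fibrations via a lifting-property argument using \ref{lem:pullbackfunctor}. The main obstacle is the first stage, since it requires invoking the local cartesian closedness of $\Gpd$ together with the adjoint functor theorem; the fibration-preservation is then a routine transposition through the adjunction.

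For the existence of $\Pi_g$, I would invoke the adjoint functor theorem. The category $\GGpd$ is locally presentable, being the category of functors from the small category $\G$ into the locally presentable category $\Gpd$; the slice categories $\GGpd/A$ and $\GGpd/B$ inherit this property. The pullback functor $g^*$ preserves all small colimits, because colimits in these slice categories are computed in $\GGpd$ and thus pointwise in $\Gpd$, pullbacks in $\GGpd$ are likewise pointwise, and in $\Gpd$ pullback along any map preserves colimits since $\Gpd$ is locally cartesian closed. By the adjoint functor theorem, $g^*$ admits a right adjoint $\Pi_g$. Note that this argument works for any morphism $g$, not only fibrations.

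For the preservation of fibrations, the strategy is to translate the right lifting property through the adjunction. Given a fibration $f\colon X \fib A$, to show that $\Pi_g f\colon \Pi_g X \to B$ is a fibration we must produce a diagonal lift in every square
$$\xymatrix{C \ar[r]\ar[d]_{j} & \Pi_g X \ar[d]^{\Pi_g f} \\ D \ar[r] & B}$$
whose left-hand side $j\colon C \to D$ is an acyclic cofibration in $\GGpd$. Regarding $C$ and $D$ as objects over $B$, the adjunction $g^* \dashv \Pi_g$ transposes this square into the square
$$\xymatrix{A\times_B C \ar[r]\ar[d]_{g^* j} & X \ar[d]^{f} \\ A\times_B D \ar[r] & A}$$
in $\GGpd/A$. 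By \ref{lem:pullbackfunctor} the map $g^* j$ is an acyclic cofibration, and since $f$ is a fibration a lift $A\times_B D \to X$ over $A$ exists; transposing back through the adjunction yields the sought lift $D \to \Pi_g X$, so $\Pi_g f$ is a fibration.
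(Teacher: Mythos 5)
Your second stage is correct and is essentially the paper's own argument: the paper also deduces fibration-preservation from Lemma \ref{lem:pullbackfunctor} ``by adjunction,'' which is exactly the transposition of lifting problems you spell out. Your first stage, however, takes a genuinely different route from the paper --- the paper constructs $\Pi_g f$ explicitly (its domain is a groupoid of pairs $(y,s)$ with $s$ a partial section of $\underline{f}$ over the fiber $g^{-1}\{y\}$, equipped with an explicit involution $\pi_g f$) and verifies the adjunction by hand --- and while an adjoint-functor-theorem argument is viable in principle, the justification you give for its key step is false.

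The category $\Gpd$ is \emph{not} locally cartesian closed. By Conduch\'e's criterion the exponentiable morphisms of groupoids are precisely the fibrations: applying the factorization-lifting condition to an identity arrow written as $\beta^{-1}\circ\beta$ already forces iso-lifting. Concretely, if $J$ denotes the codiscrete groupoid on $\{0,1,2\}$, pulling back the pushout $\bI_{01}\sqcup_{\1}\bI_{12}\cong J$ along the (non-fibration) full inclusion $\bI_{02}\hookrightarrow J$ yields $\1\sqcup\1$ on the one side and $\bI$ on the other, so pullback along a general functor does not preserve colimits. Hence your clause ``pullback along any map preserves colimits since $\Gpd$ is locally cartesian closed'' is wrong, and so is the parenthetical claim that your argument works for arbitrary $g$: the fibration hypothesis on $g$ is essential, not incidental. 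The step you actually need --- that pullback along $\underline{g}$ preserves colimits in $\Gpd$ --- is true, but precisely because $\underline{g}$ is a fibration and fibrations of groupoids are exponentiable (this is Giraud's result, which the paper itself invokes in Remark \ref{rmk:giraud}); with that substitution your local-presentability argument does go through. Be aware, though, that the paper's explicit description of $\mathrm{dom}(\Pi_g f)$ and of its involution is used later (e.g.\ in the proof of Proposition \ref{prop:funext}, which exhibits fixed points of $\mathrm{dom}(\Pi_g f)$ as $\pi_g f$-invariant sections), so the purely abstract existence proof, while sufficient for the present statement, delivers strictly less than the paper's.
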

\begin{proof}
\label{proof:thm:rightadjoint}
Since right adjoints preserve limits, the following square commutes up to isomorphism
	$$\xymatrix{\GGpd/B \ar[r]^{g^*} \ar[d]_{(\_)} & \GGpd/A \ar[d]^{(\_)} \\ \Gpd/\underline{B} \ar[r]_{(\underline{g})^*} & \Gpd/\underline{A}\;.}$$
	Moreover, the vertical functors preserve and reflect colimits, and $(\underline{g})^*$ is a left adjoint \cite[lemma 4.3, theorem 4.4]{giraud:meth-desc} hence it preserves colimits too. Thus $g^*$ preserves colimits. Given that $\GGpd/B$ and $\GGpd/A$ are locally presentable categories, we conclude that $g^*$ has a right adjoint. \\
	By adjointness and \ref{lem:pullbackfunctor}, $\Pi_g$ preserves fibrations.
\end{proof}

\begin{rmk}
\label{rmk:giraud}
When the involutions involved in the statement of theorem \ref{thm:rightadjoint} are identities, we recover Giraud's theorem \cite[lemma 4.3, theorem 4.4]{giraud:meth-desc}.	
\end{rmk}	

\begin{cor}
\label{cor:projectivettfc}
	The category $\GGpd$ has the structure of a type-theoretic fibration category with respect to projective fibrations.
\end{cor}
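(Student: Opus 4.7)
The plan is to verify the five axioms of Definition \ref{def:ttfc} one by one, drawing on the results already established, so that the bulk of the work has essentially been done and the corollary is a matter of assembling pieces.

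First I would dispatch axioms \ref{item:cat1} and \ref{item:cat2}. The terminal object is $\1!$ from Notation \ref{notn:2.1}. The projective fibrations form a subcategory (closed under composition and containing identities) because objectwise fibrations of groupoids do so; they contain all isomorphisms for the same reason. A morphism with codomain $\1!$ is objectwise a morphism to the terminal groupoid, and this is a fibration in $\Gpd$ (every functor to $\1$ is a fibration), so every such morphism is a projective fibration. Note that the definition of \emph{acyclic cofibration} in Definition \ref{def:ttfc} (left lifting against all fibrations) coincides with the model-theoretic notion used in \S\ref{sec:pms}, so Proposition \ref{prop:2.2} and Lemma \ref{lem:rightpropernessGGpd} can be applied directly.

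Next, axiom \ref{item:cat3}: in the functor category $\GGpd = [\G,\Gpd]$ both limits and the property of being a projective fibration are computed objectwise, so the pullback of a projective fibration along any morphism exists and is again a projective fibration. Axiom \ref{item:cat4} is precisely the content of Theorem \ref{thm:rightadjoint}: it constructs $\Pi_g$ as a right adjoint to $g^*$ and shows it preserves fibrations; the stability of acyclic cofibrations under pullback along $g$, claimed in Definition \ref{def:ttfc} as a consequence of \ref{item:cat4}, is independently established by Lemma \ref{lem:rightpropernessGGpd}. Finally, axiom \ref{item:cat5} follows from the factorization axiom of the projective model structure on $\GGpd$ (which exists by \cite[Proposition A.2.8.2]{lurie:higher-topoi}): any morphism factors as an acyclic cofibration followed by a fibration.

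There is really no main obstacle, since every clause has been handled in the preceding lemmas; the only subtlety worth spelling out is the compatibility of the two notions of acyclic cofibration (model-theoretic versus lifting-theoretic in Definition \ref{def:ttfc}), which holds because in any model category the acyclic cofibrations are exactly the maps with the left lifting property against all fibrations. Thus the corollary reduces to a short verification whose sole purpose is to organize the earlier results under the axioms of Definition \ref{def:ttfc}.
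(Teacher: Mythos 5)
Your proposal is correct and follows the same route as the paper, which simply declares conditions \ref{item:cat1}, \ref{item:cat2}, \ref{item:cat3} and \ref{item:cat5} straightforward and invokes Theorem \ref{thm:rightadjoint} for \ref{item:cat4}; you merely spell out the routine verifications (objectwise fibrations, pointwise limits, the model-structure factorization, and the agreement of the two notions of acyclic cofibration) that the paper leaves implicit.
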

\begin{proof}
\label{proof:cor:projectivettfc}	
	The required conditions \ref{item:cat1}, \ref{item:cat2}, \ref{item:cat3} and \ref{item:cat5} are straighforward. The theorem \ref{thm:rightadjoint} allows us to conclude that \ref{item:cat4} holds.
\end{proof}

\section{A universe in $\GGpd$}
\label{sec:universes}

We recall the notion of a \emph{universe} \cite[Definition 6.12]{shulman:invdia} in a type-theoretic fibration category.

\begin{defn}
\label{def:universe}
	A fibration $p: \widetilde{U}\fib U$ in a type-theoretic fibration category $\mathscr{C}$ is a \textbf{universe} if the following hold.
	\begin{enumerate}
		\item Pullbacks of $p$ are closed under composition and contain the identities.\label{item:u1}
		\item If $f: B\fib A$ and $g: A\fib C$ are pullbacks of $p$, so is $\Pi_g f \fib C$.\label{item:u2}
		\item If $A\fib C$ and $B\fib C$ are pullbacks of $p$, then any morphism $f: A\to B$ over $C$ factors as an acyclic cofibration followed by a pullback of $p$.\label{item:u3}
	\end{enumerate}
\end{defn}

\begin{defn}
\label{defn:smallfib}
	Given a universe $p\colon\widetilde{U}\rightarrow U$ in a type-theoretic fibration category, a \textbf{small fibration}, or a $U$-small fibration, is a pullback of $p$. 	
\end{defn}

\begin{rmk}
\label{rmk:modelttwithuniv}
 A universe in a type-theoretic fibration category interprets a universe type in type theory.	
\end{rmk}	 

We now move on to constructing universes in the type-theoretic fibration category on $\GGpd$ given in \ref{cor:projectivettfc}. Note that the groupoid model \cite{hs:gpd-typethy} of type theory can be reformulated \cite[Examples 2.16]{shulman:invdia} in terms of a type-theoretic fibration category using the natural model structure on $\Gpd$. Given any inaccessible cardinal $\kappa$, in this type-theoretic fibration structure on $\Gpd$ there exists a (univalent) universe $p: \widetilde{V_\kappa}\rightarrow V_\kappa$, where $V_\kappa$ is the groupoid whose objects are $\kappa$-small discrete groupoids with isomorphisms between them, $\widetilde{V_\kappa}$ the corresponding groupoid of pointed discrete groupoids and $p$ the obvious projection. The $\kappa$-smallness means that the set of objects of a discrete groupoid has cardinality strictly less than $\kappa$. The $V_\kappa$-small fibrations are precisely the discrete fibrations of groupoids with $\kappa$-small fibers. So, projective fibrations being objectwise fibrations, a natural candidate for a (univalent) universe in $\GGpd$ would be a universal fibration that classifies projective fibrations that are objectwise discrete fibrations of groupoids with $\kappa$-small fibers. \\
Below we define $\widetilde{U}, U$ and $p\colon \widetilde{U}\rightarrow U$ in $\GGpd$. For the rest of this section $\kappa$ is an inaccessible cardinal.
\begin{itemize}
	\item The objects of the groupoid $\widetilde{U}$ are dependent tuples of the form $(A, B, \varphi, a)$, where $A, B$ are $\kappa$-small discrete groupoids, $\varphi\colon A \rightarrow B$ is an isomorphism in $\Gpd$, and $a$ is an object of $A$.
	\item The morphisms in $\widetilde{U}$ between $(A, B, \varphi, a)$ and $(C, D, \psi, c)$ are pairs of isomorphisms of the form $(\rho\colon A\rightarrow C, \tau\colon B\rightarrow D)$ such that $\psi\circ \rho = \tau\circ \varphi$ and $\rho (a) = c$. 
\end{itemize}
The composition in $\widetilde{U}$ is given by 
$$(\rho',\tau')\circ(\rho,\tau)\coloneqq(\rho'\circ\rho,\tau'\circ\tau)\;.$$ Note that $\widetilde{U}$ is a groupoid. Indeed, the inverse of the morphism $(\rho,\tau)$ is given by
$$(\rho,\tau)^{-1}\coloneqq(\rho^{-1},\tau^{-1})\;.$$ We equip $\widetilde{U}$ with the involution $\tilde{\upsilon}$ as follows,
$$\fsix{\tilde{\upsilon}\colon\widetilde{U}}{\widetilde{U}}{(A,B,\varphi,a)}{(B,A,\varphi^{-1},\varphi(a))}{(\rho,\tau)}{(\tau,\rho)\;.}$$ One denotes by $U$ the ``unpointed'' version of $\widetilde{U}$, \textit{i.e.} objects are of the form $(A,B,\varphi)$ and morphisms of the form $(\rho,\tau)$, with its corresponding involution $\upsilon$. We define the morphism $p$ in $\GGpd$ as the projection 
$$\fsix{p\colon\widetilde{U}}{U}{(A,B,\varphi,a)}{(A,B,\varphi)}{(\rho,\tau)}{(\rho,\tau)\;.}$$
We want to prove that $p:\widetilde{U}\rightarrow U$ is a universe in the type-theoretic fibration category \ref{cor:projectivettfc}.

\begin{defn}
\label{defn:discretefib}
	In the natural model structure on $\Gpd$, a \textbf{discrete fibration} of groupoids is a fibration satisfying the property that given any isomorphism $\varphi$ in the target groupoid and any object $x$ in the fiber of $\text{dom}(\varphi)$, there exists a \emph{unique} lift of $\varphi$ at $x$ in the domain groupoid. \\ 
	The map that sends any such lifting problem to its unique solution is called a (split) \emph{cleavage} of $f$.
\end{defn}

\begin{lem}
\label{lem:4.3}
	The morphism $p:\widetilde{U} \rightarrow U$ is a projective fibration in $\GGpd$ between fibrant objects, whose underlying morphism of groupoids $\underline{p}$ is a discrete fibration.
\end{lem}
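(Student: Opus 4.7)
The plan is to handle three claims in sequence. First, fibrancy: in the natural model structure on $\Gpd$ every groupoid is trivially fibrant, so every object of $\GGpd$ is projectively fibrant; in particular $\widetilde{U}$ and $U$ are fibrant. Second, I would verify that $p$ is genuinely a morphism in $\GGpd$, that is, that it intertwines $\tilde{\upsilon}$ and $\upsilon$; this is an immediate check from the definitions, since projecting away the fourth coordinate commutes with the swap $(A,B,\varphi,a)\mapsto(B,A,\varphi^{-1},\varphi(a))$ and its analogue on morphisms.

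The substance of the argument is to show that $\underline{p}$ is a discrete fibration of groupoids. Given any isomorphism $(\rho,\tau)\colon(A,B,\varphi)\to(C,D,\psi)$ in $U$ and any object $(A,B,\varphi,a)$ of $\widetilde{U}$ lying over its source, the only candidate lift is the morphism $(\rho,\tau)\colon(A,B,\varphi,a)\to(C,D,\psi,\rho(a))$: the pointing constraint built into the definition of morphisms in $\widetilde{U}$ forces the fourth coordinate of the target to be exactly $\rho(a)$, so no other choice is available. The required equations $\psi\circ\rho=\tau\circ\varphi$ (inherited from $U$) and $\rho(a)=\rho(a)$ (tautological) both hold, so this is a well-defined morphism of $\widetilde{U}$, and it is manifestly unique. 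Hence $\underline{p}$ is a discrete fibration, and a fortiori a fibration in $\Gpd$ since a discrete fibration satisfies the ordinary path-lifting property (with uniqueness as an added bonus).

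Combining these observations, $p$ is an equivariant functor whose underlying map of groupoids is a fibration, which is exactly the definition of a projective fibration in $\GGpd$. I do not anticipate any real obstacle: the whole point of enriching objects of $U$ with a basepoint in the first factor to form $\widetilde{U}$ is to make lifts along $p$ canonically determined, so the discrete-fibration structure is baked into the construction. The only thing one has to be slightly careful about is tracking that the pointing lives in $A$ (not $B$), so that the displacement $a\mapsto\rho(a)$ uses $\rho$ rather than $\tau$; with that convention in place the verification is purely formal.
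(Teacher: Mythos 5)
Your proposal is correct and follows essentially the same route as the paper: fibrancy of all objects because projective fibrations and the terminal object are objectwise and every groupoid is fibrant in $\Gpd$, followed by the observation that the unique lift of an isomorphism $(\rho,\tau)$ at a pointed object is $(\rho,\tau)$ itself viewed as a morphism to $(\mathrm{cod}(\rho,\tau),\rho(a))$, which is exactly the split cleavage $c_p$ the paper writes down. The extra equivariance check you include is a harmless addition that the paper relegates to the construction of $p$ in Section 4.
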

\begin{proof}
\label{proof:lem:4.3}
	The projective fibrations being objectwise, the terminal object being pointwise and every groupoid being fibrant with respect to the natural model structure on $\Gpd$, every object in $\GGpd$ is fibrant with respect to the projective model structure. In particular, the groupoids $\widetilde{U}$ and $U$ are fibrant objects. Moreover, $p$ is an objectwise discrete fibration and we define its unique split cleavage $c_p$ as follows. Given $(\rho,\tau)$ an isomorphism in $U$ 
	and $(\text{dom}(\rho,\tau),x)$ an element in the $p$-fiber of $\text{dom}(\rho,\tau)$, we have no choice but to take $c_{p}((\rho,\tau),(\text{dom}(\rho,\tau),x))\coloneqq (\rho,\tau)$ seen as a morphism in $\widetilde{U}$ between $(\text{dom}(\rho,\tau),x)$ and $(\text{cod}(\rho,\tau),\rho(x))$. 
\end{proof}

\begin{lem}
\label{lem:4.4} 
	The $U$-small fibrations in $\GGpd$ are precisely the fibrations whose underlying morphisms of groupoids are discrete fibrations with $\kappa$-small fibers.
\end{lem}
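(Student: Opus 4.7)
The plan is to prove the two implications separately. For the forward direction, observe first that $\underline{p}\colon\widetilde{U}\to U$ is a discrete fibration by \ref{lem:4.3}, and its fibers are $\kappa$-small: the fiber of $\underline{p}$ over $(A,B,\varphi)$ is in bijection with the object set of $A$, which is $\kappa$-small by construction. Since the forgetful functor $\_\colon\GGpd\to\Gpd$ preserves pullbacks (limits in $\GGpd$ are pointwise) and since both the property of being a discrete fibration and the $\kappa$-smallness of fibers are stable under pullback in $\Gpd$, any pullback of $p$ in $\GGpd$ has the claimed property.

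For the reverse direction, the main task is to reconstruct a classifying square. Let $f\colon X\epi Y$ be a projective fibration whose underlying morphism is a discrete fibration with $\kappa$-small fibers. For every $y\in Y$, let $X_{y}$ denote the fiber of $\underline{f}$ over $y$, which is a $\kappa$-small discrete groupoid. I would define a map $\chi\colon Y\to U$ in $\GGpd$ on objects by
$$\chi(y) \coloneqq \bigl(X_{y},\; X_{\be(y)},\; \al|_{X_{y}}\bigr),$$
noting that $\al$ restricts to an isomorphism $X_{y}\xto{\sim}X_{\be(y)}$ because $\underline{f}\circ\al = \be\circ\underline{f}$. On a morphism $u\colon y\to y'$ in $Y$, the unique split cleavage of the discrete fibration $\underline{f}$ produces functors $\rho_{u}\colon X_{y}\to X_{y'}$ and $\tau_{u}\colon X_{\be(y)}\to X_{\be(y')}$ by uniquely lifting $u$ and $\be(u)$ respectively; set $\chi(u)\coloneqq(\rho_{u},\tau_{u})$. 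The equation $\al|_{X_{y'}}\circ\rho_{u}=\tau_{u}\circ\al|_{X_{y}}$ follows from the uniqueness of lifts in the discrete fibration $\underline{f}$ applied to $\be(u)$ with base point $\al|_{X_{y}}(x)$. Equivariance of $\chi$ amounts to checking that $\upsilon\bigl(\chi(y)\bigr)=\chi(\be(y))$, which reduces to the identity $\al|_{X_{\be(y)}}=(\al|_{X_{y}})^{-1}$, an immediate consequence of $\al^{2}=\id$.

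Next, I would define $\tilde{\chi}\colon X\to\widetilde{U}$ by $\tilde{\chi}(x)\coloneqq(X_{f(x)},X_{\be(f(x))},\al|_{X_{f(x)}},x)$, with the evident action on morphisms built from the same cleavage. By construction $p\circ\tilde{\chi}=\chi\circ f$, and $\tilde{\chi}$ is equivariant. It remains to verify that the resulting square is a pullback in $\GGpd$. Since pullbacks in $\GGpd$ are computed pointwise, this reduces to a verification in $\Gpd$: an object of $Y\times_{U}\widetilde{U}$ is a pair $(y,(A,B,\varphi,a))$ with $(A,B,\varphi)=\chi(y)=(X_{y},X_{\be(y)},\al|_{X_{y}})$, which is precisely an object $a\in X_{y}$, i.e.\ an element of $X$ over $y$; a similar description on morphisms, combined with the uniqueness of cleavage lifts, shows that the canonical comparison $X\to Y\times_{U}\widetilde{U}$ is bijective on objects and fully faithful, hence an isomorphism in $\GGpd$.

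The main obstacle I anticipate is bookkeeping rather than any conceptual difficulty: one must carefully match the morphism data of the pullback $Y\times_{U}\widetilde{U}$ with the morphisms of $X$, using that morphisms in $\widetilde{U}$ above $(\rho,\tau)$ are forced by the discrete fibration structure, and one must keep track of the involutions at every stage to ensure equivariance of both $\chi$ and $\tilde{\chi}$.
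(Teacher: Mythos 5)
Your proposal is correct and follows essentially the same route as the paper: the forward direction via pointwise pullbacks and the (unique) cleavage inherited from $p$, and the converse by building the classifying map $y\mapsto(X_y,X_{\be(y)},\al|_{X_y})$ from the cleavage of $\underline{f}$ and exhibiting the comparison to $Y\times_U\widetilde{U}$ as an isomorphism. The only cosmetic difference is that the paper writes down the explicit inverse of the comparison functor rather than arguing it is bijective on objects and fully faithful.
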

\begin{proof}
\label{proof:lem:4.4}
	Since discrete fibrations are defined by a unique-lifting property, they are stable under pullback. So, the first direction is clear. \\
	Conversely, given $f: A\rightarrow B$, assume that $\underline{f}$ is a discrete fibration of groupoids with $\kappa$-small fibers. We denote $c_{f}$ its cleavage. One has to display $f$ as a pullback of $p$ in $\GGpd$ along a morphism $g$. We define $g$ as follows
	$$\fsix{g\colon B}{U}{x}{(f^{-1}\lbrace x\rbrace,f^{-1}\lbrace \be(x)\rbrace,\alpha_{x})}{x\xrightarrow{\sigma}y}{(\rho_{\sigma},\tau_{\sigma})\;.}$$ Since $f$ is a discrete fibration with $\kappa$-small fibers, these groupoids are discrete and $\kappa$-small. Moreover, for $x\in B$ we define $\alpha_{x}$ as the isomorphism obtained from the restriction of $\alpha$ to $f^{-1}\lbrace x\rbrace$. 
	Given $\sigma\colon x\rightarrow y$ in $B$, we define $\rho_{\sigma}$ as follows 
	$$\ffour{\rho_{\sigma}\colon f^{-1}\lbrace x\rbrace}{f^{-1}\lbrace y\rbrace}{z}{\text{cod}(c_{f}(\sigma,z))\;.}$$
	In the same way one has 
	$$\ffour{\tau_{\sigma}\colon f^{-1}\lbrace \be(x)\rbrace}{f^{-1}\lbrace \be(y)\rbrace}{z}{\text{cod}(c_{f}(\be(\sigma),z))\;.}$$
	 The reader can easily check that $\rho_{\sigma}$ and $\tau_{\sigma}$ are isomorphisms, that $\al_y\circ \rho_{\sigma}$ is equal to $\tau_{\sigma}\circ \al_{x}$, and $g$ is functorial and equivariant. It remains to check that $A$ is isomorphic to $B\times_{U}\widetilde{U}$ above $B$, \textit{i.e.} we need to provide an isomorphism $\chi\colon A\rightarrow B\times_{U}\widetilde{U}$ such that $pr_{1}\circ\chi = f$, where $pr_{1}\colon B\times_{U}\widetilde{U}\rightarrow B$ is the first projection. Let define $\chi$ as follows
	$$\fsix{\chi\colon A}{B\times_{U}\widetilde{U}}{x}{(f(x),f^{-1}\lbrace f(x)\rbrace,f^{-1}\lbrace \be(f(x))\rbrace,\alpha_{f(x)},x)}{x\xrightarrow{\sigma}y}{(f(\sigma),\rho_{f(\sigma)},\tau_{f(\sigma)})\;.}$$
	The functor $\chi$ is equivariant and it is actually an isomorphism with $\chi^{-1}$ given by 
	$$\fsix{\chi^{-1}\colon B\times_{U}\widetilde{U}}{A}{(x,f^{-1}\lbrace x\rbrace,f^{-1}\lbrace \be(x)\rbrace,\al_{x},z)}{z}{(\sigma,\rho_{\sigma},\tau_{\sigma})}{c_{f}(\sigma,\_)\;,}$$
	where $\_$ denotes the last element of the tuple $\text{dom}(\rho_{\sigma},\tau_{\sigma})$.
\end{proof}

\begin{rmk}
\label{rmk:4.5}
	The previous lemma expresses in which sense our putative universe $p\colon \widetilde{U}\rightarrow U$ in $\GGpd$ is the natural candidate with respect to the projective model structure that lifts the (univalent) universe $p\colon \widetilde{V_\kappa}\rightarrow V_{\kappa}$ in $\Gpd$.
\end{rmk}

\begin{lem}
\label{lem:u1}
	In $\GGpd$ $\kappa$-small fibrations are closed under composition and contain the identities.
\end{lem}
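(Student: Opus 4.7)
By \ref{lem:4.4}, a $U$-small fibration is exactly a projective fibration whose underlying functor is a discrete fibration of groupoids with $\kappa$-small fibers, so the plan is to check the two closure properties at the level of these three attributes separately: being a projective fibration, being a discrete fibration of groupoids, and having $\kappa$-small fibers.

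For identities, the identity morphism $\id_A\colon A\to A$ in $\GGpd$ is obviously a projective fibration, and $\underline{\id_A}=\id_{\underline A}$. Given any isomorphism $\varphi\colon x\to y$ in $A$ and any object $z$ in the fiber of $\id_{\underline A}$ over $x$, one necessarily has $z=x$, and the unique lift of $\varphi$ at $z$ by the identity functor is $\varphi$ itself; hence $\underline{\id_A}$ is a discrete fibration. Each fiber contains a single object and a single morphism, so it is trivially $\kappa$-small. Therefore $\id_A$ is a $U$-small fibration.

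For closure under composition, let $f\colon A\fib B$ and $g\colon B\fib C$ be two $U$-small fibrations. Since projective fibrations are closed under composition, $g\circ f$ is a projective fibration. To show that $\underline{g\circ f}=\underline g\circ \underline f$ is a discrete fibration, fix an isomorphism $\varphi\colon x\to y$ in $\underline C$ and an object $z$ in the fiber of $\underline{g\circ f}$ over $x$. Applying the cleavage of $\underline g$ to $\varphi$ and $f(z)$ yields a unique lift $\tilde\varphi$ in $\underline B$ at $f(z)$; applying the cleavage of $\underline f$ to $\tilde\varphi$ and $z$ yields a unique lift $\tilde{\tilde\varphi}$ in $\underline A$ at $z$, which is a lift of $\varphi$ by $\underline{g\circ f}$. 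Uniqueness of the resulting lift follows from the uniqueness at each stage: any lift of $\varphi$ at $z$ is sent by $\underline f$ to a lift of $\varphi$ at $f(z)$ by $\underline g$, and that forces the intermediate lift to equal $\tilde\varphi$, which in turn forces the $\underline A$-lift to equal $\tilde{\tilde\varphi}$.

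It remains to check the $\kappa$-smallness of the fibers of $\underline{g\circ f}$. The fiber over $x\in C$ decomposes as
\[
(\underline{g\circ f})^{-1}\{x\}\;=\;\coprod_{y\in \underline g^{-1}\{x\}}\underline f^{-1}\{y\},
\]
so its object set is a $\underline g^{-1}\{x\}$-indexed disjoint union of $\kappa$-small sets indexed by a $\kappa$-small set; since $\kappa$ is inaccessible and hence regular, this union is $\kappa$-small, and the fiber is a discrete groupoid. Consequently $g\circ f$ satisfies the characterisation of \ref{lem:4.4} and is a $U$-small fibration. The only place any subtlety enters is the last step, where inaccessibility of $\kappa$ is essential to close $\kappa$-small sets under $\kappa$-indexed disjoint unions.
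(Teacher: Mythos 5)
Your proof is correct and follows the same route as the paper: reduce via \ref{lem:4.4} to the characterisation of small fibrations as projective fibrations whose underlying functors are discrete fibrations with $\kappa$-small fibers, and check closure of those properties. The paper simply declares this ``straightforward''; you have filled in the details (composing cleavages, and using regularity of the inaccessible $\kappa$ to bound the fibers of a composite), all of which check out.
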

\begin{proof}
\label{proof:lem:u1}
	Since according to \ref{lem:4.4} small fibrations are the objectwise discrete fibrations of groupoids with small fibers, it is straightforward.
\end{proof}

\begin{lem}
\label{lem:u2}
	If $f$ and $g$ are $\kappa$-small fibrations in $\GGpd$, so is $\Pi_{g}f$.
\end{lem}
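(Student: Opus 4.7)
The plan is to invoke \ref{lem:4.4} to recast the claim concretely: since \ref{thm:rightadjoint} already shows $\Pi_g f$ is a projective fibration in $\GGpd$, it suffices to verify that its underlying morphism of groupoids is a discrete fibration with $\kappa$-small fibers.

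I would first establish the discrete-fibration property. Given an isomorphism $\sigma\colon y \to y'$ in $B$ and an object $(y, s)$ in the fiber over $y$, a lift is a pair $(\sigma, v)$ with $v\colon g^*\sigma \to \underline{f}$ in $\Gpd/A$ restricting to $s$ on $A\times_B\{0\}$. Because $\underline{g}$ is a discrete fibration, $\sigma$ lifts uniquely at each $a\in g^{-1}\{y\}$ to some $\tilde\sigma_a\colon a\to\sigma_*(a)$, yielding a bijection $\sigma_*\colon g^{-1}\{y\}\to g^{-1}\{y'\}$. Then, because $\underline{f}$ is also a discrete fibration, $\tilde\sigma_a$ lifts uniquely at $s(a)\in C$; this simultaneously forces the value $v(\tilde\sigma_a,\phi)$ in $C$ and the value $s'(\sigma_*(a))$ of the target partial section. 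All remaining morphisms of $g^*\sigma$ lie over identities in $B$, hence over identities in $A$ by discreteness of $g$'s fibers, and their images under $v$ are forced to be identities by discreteness of $f$'s fibers. This yields existence and uniqueness at once, and incidentally shows that morphisms within any fiber of $\Pi_g f$ are identities.

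It remains to check $\kappa$-smallness of the fibers. By \ref{lem:4.4}, $g^{-1}\{y\}$ is discrete and $\kappa$-small, and each $f^{-1}\{a\}$ is likewise discrete and $\kappa$-small; the set of partial sections $s$ thus injects into $\prod_{a\in g^{-1}\{y\}} f^{-1}\{a\}$, which is $\kappa$-small by the inaccessibility of $\kappa$. I expect the main obstacle to be the bookkeeping required to verify that the twice-applied lifting procedure genuinely assembles into a well-defined functor $v\colon g^*\sigma \to \underline{f}$ over $A$ rather than an unsystematic collection of data; once this is in hand, both the discrete-fibration property and smallness follow cleanly, and equivariance plays no role beyond what is already guaranteed by \ref{thm:rightadjoint}.
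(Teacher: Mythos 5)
Your proposal is correct, and it begins with the same key move as the paper: use \ref{lem:4.4} to translate ``small fibration'' into ``underlying map of groupoids is a discrete fibration with $\kappa$-small fibers,'' together with the observation that the explicit construction in \ref{thm:rightadjoint} computes $\underline{\Pi_g f}$ as $\Pi_{\underline{g}}\underline{f}$, so that everything happens at the level of $\Gpd$. Where you diverge is in how that $\Gpd$-level fact is discharged: the paper simply cites that $V_\kappa$ is a universe in the type-theoretic fibration structure on $\Gpd$ (the Hofmann--Streicher groupoid model), which already contains closure of $V_\kappa$-small fibrations under dependent products, and stops there in one line. You instead re-derive this closure by hand --- unique lifting of $\sigma$ through $\underline{g}$ and then through $\underline{f}$ forces the morphism $v\colon g^*\sigma\to\underline{f}$, and the fiber of partial sections injects into $\prod_{a\in g^{-1}\{y\}}f^{-1}\{a\}$, which is $\kappa$-small by inaccessibility. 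Both routes are valid; the paper's is shorter because it leans on the known universe in $\Gpd$, while yours is self-contained and makes visible exactly where discreteness and inaccessibility are used. One small point of care in your verification: not all morphisms of $A\times_B\bI$ other than those over $\phi$ lie over identities of $B$ --- those over $\phi^{-1}$ lie over $\sigma^{-1}$ --- but their images under $v$ are forced as inverses of the already-determined lifts, so this is only a bookkeeping omission, not a gap.
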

\begin{proof}
\label{proof:lem:u2}
	It suffices to prove that $\underline{\Pi_gf}$ is a $V_{\kappa}$-small fibration of groupoids. But, according to the explicit description given in \cite[lemma 4.3.4]{bordg:thesis} $\underline{\Pi_gf}$ is $\Pi_{\underline{g}}\underline{f}$\footnote{Right adjoints to pullback functors are not generally computed pointwise, but this is true in our case.}. Since $\underline{g}, \underline{f}$ are $V_{\kappa}$-small fibrations by assumption and $V_{\kappa}$ is a universe in $\Gpd$, $\underline{\Pi_gf}$ is a $V_{\kappa}$-small fibration between groupoids. 
\end{proof}

\begin{lem}
\label{lem:diagonalmap}
	For any $U$-small fibration $f$, the diagonal map $\Delta_f$ is a $U$-small fibration.
\end{lem}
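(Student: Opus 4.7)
The plan is to reduce the claim, via \ref{lem:4.4}, to the statement that $\underline{\Delta_f}$ is a discrete fibration of groupoids with $\kappa$-small fibres. Since $\Delta_f$ is induced by the universal property of the pullback $A\times_B A$ applied to the pair $(\id_A,\id_A)$ of equivariant maps, it is automatically equivariant, so the underlying statement is all that needs attention.

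First I would unfold the data. By \ref{lem:4.4}, the assumption on $f\colon A\fib B$ gives that $\underline{f}$ admits a unique split cleavage $c_f$ and has $\kappa$-small fibres. Objects of $A\times_B A$ are pairs $(a_1,a_2)$ with $f(a_1)=f(a_2)$, and morphisms are pairs $(\varphi_1,\varphi_2)$ with $f(\varphi_1)=f(\varphi_2)$; the map $\Delta_f$ sends $a\mapsto (a,a)$ and $\varphi\mapsto(\varphi,\varphi)$.

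Next I would verify the discrete fibration property. Given an isomorphism $(\varphi_1,\varphi_2)\colon (a,a)\to (a_1',a_2')$ in $A\times_B A$ starting at $\Delta_f(a)$, both $\varphi_1$ and $\varphi_2$ are isomorphisms in $A$ with common domain $a$ and common image $f(\varphi_1)=f(\varphi_2)$ under $\underline{f}$; uniqueness of the cleavage $c_f$ then forces $\varphi_1=\varphi_2$, hence $a_1'=a_2'$, and this common morphism is the unique lift through $\Delta_f$. This establishes both existence and uniqueness of lifts, i.e., that $\underline{\Delta_f}$ is a discrete fibration.

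Finally I would inspect fibres. The fibre of $\Delta_f$ over $(a_1,a_2)$ is empty unless $a_1=a_2=:a_0$, in which case it consists of the single object $a_0$ and its identity (any morphism in the fibre must lie over $1_{(a_0,a_0)}$, forcing it to be $1_{a_0}$ by the uniqueness we just observed). Hence every fibre is either empty or a one-point discrete groupoid, and is therefore $\kappa$-small. Invoking \ref{lem:4.4} in the reverse direction then exhibits $\Delta_f$ as a $U$-small fibration. The only mildly subtle step is the fibration property for $\Delta_f$, and this reduces directly to the uniqueness of lifts in the discrete fibration $\underline{f}$; no further adaptation to the $\Z2$-equivariant setting is required.
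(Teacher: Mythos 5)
Your proposal is correct and follows essentially the same route as the paper: reduce via \ref{lem:4.4} to showing that the underlying functor $\Delta_{\underline{f}}$ is a discrete fibration with $\kappa$-small fibres, observe that a lifting problem for it is a pair of isomorphisms with common domain and common image under $\underline{f}$ so that uniqueness of lifts for the discrete fibration $\underline{f}$ forces them to coincide, and note that the fibres are empty or singletons. Your write-up is somewhat more explicit about the uniqueness of the lift through $\Delta_f$ itself and about equivariance, but there is no substantive difference.
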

\begin{proof}
\label{proof:lem:diagonalmap}
	Since pullbacks are pointwise in $\GGpd$, one has $\underline{\Delta_f} = \Delta_{\underline{f}}$. It suffices to prove that $\Delta_{\underline{f}}$ is a $V_{\kappa}$-small fibration of groupoids, namely a discrete fibration with small fibers. A lifting problem for $\Delta_{\underline{f}}$ with respect to the generating acyclic cofibration $i$ is nothing but a pair of isomorphisms $(\varphi,\psi)$ in $E^2$ such that $f(\varphi) = f(\psi)$ and $\text{dom}(\varphi) = \text{dom}(\psi)$. Since $\underline{f}$ is a discrete fibration, one has $\varphi = \psi$. So, $\Delta_{\underline{f}}$ is a discrete fibration and its fibers are obviously small because any fiber is either empty or a singleton.    
\end{proof}

\begin{thm}
\label{thm:universe}
	The morphism $p\colon\widetilde{U}\rightarrow U$ is a universe in the type-theoretic fibration category $\GGpd$.
\end{thm}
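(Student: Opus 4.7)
The plan is to verify the three conditions of Definition \ref{def:universe} for the morphism $p\colon\widetilde{U}\to U$ just constructed. Conditions (1) and (2) are literally the content of Lemmas \ref{lem:u1} and \ref{lem:u2}, so the substantive work lies entirely in condition (3).

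For (3), suppose we are given $U$-small fibrations $f_A\colon A\fib C$ and $f_B\colon B\fib C$ together with a morphism $f\colon A\to B$ over $C$. My strategy is to show that $f$ is itself a pullback of $p$, after which the trivial factorization $f = f\circ \id_A$ supplies the required decomposition (the identity is an acyclic cofibration since it has the left lifting property with respect to every morphism). To prove $f$ is $U$-small, I would decompose $f = \pi_2 \circ \Gamma_f$ via the graph, where $\Gamma_f = (\id_A,f)\colon A\to A\times_C B$ and $\pi_2\colon A\times_C B\to B$ is the second projection. The projection $\pi_2$ is a pullback of $f_A$ along $f_B$, hence a $U$-small fibration. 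As for $\Gamma_f$, a direct computation exhibits it as the pullback of the relative diagonal $\Delta_{f_B}\colon B\to B\times_C B$ along the map $f\times_C \id_B\colon A\times_C B\to B\times_C B$; Lemma \ref{lem:diagonalmap} tells us that $\Delta_{f_B}$ is $U$-small, and since pullbacks of $p$ are manifestly closed under pullback, $\Gamma_f$ is also $U$-small. Lemma \ref{lem:u1} then realises $f = \pi_2 \circ \Gamma_f$ as a $U$-small fibration.

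The only non-routine step is the verification that $\Gamma_f$ arises as the pullback of $\Delta_{f_B}$, which is a short set-level computation in $\GGpd$ using the universal property of both pullbacks: elements of $(A\times_C B)\times_{B\times_C B} B$ are pairs $((a,b),b')$ with $(f(a),b)=(b',b')$, forcing $b=b'=f(a)$, and the resulting projection to $A\times_C B$ is exactly $a\mapsto (a,f(a))$. Once this geometric observation is in place, the theorem reduces at once to the three preceding lemmas \ref{lem:u1}, \ref{lem:u2}, and \ref{lem:diagonalmap}, completing the verification that $p$ is a universe.
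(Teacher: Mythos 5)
Your proof is correct, and conditions (1) and (2) are handled exactly as in the paper, but for condition (3) you take a genuinely different (and more self-contained) route. The paper disposes of (3) in one line by citing Shulman's Remark~6.13, which says that under (1) condition (3) is equivalent to every small fibration admitting a small path fibration, and then invokes Lemma \ref{lem:diagonalmap}. You instead prove directly that any morphism $f\colon A\to B$ over $C$ between small fibrations is itself a pullback of $p$, via the graph factorization $f=\pi_2\circ\Gamma_f$: the projection $\pi_2$ is a pullback of the small fibration $A\fib C$, and $\Gamma_f$ is a pullback of $\Delta_{f_B}$, which is small by Lemma \ref{lem:diagonalmap}; closure of pullbacks of $p$ under pullback (pasting) and under composition (Lemma \ref{lem:u1}) then finish it, with $\id_A$ serving as the acyclic cofibration in the required factorization. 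This is in effect the proof of the relevant direction of Shulman's remark, specialized to the present situation where the diagonal of a small fibration is already a fibration (so the mapping path space degenerates to $A$ itself); your set-level verification that $\Gamma_f$ is the pullback of $\Delta_{f_B}$ along $f\times_C\id_B$ is the only computation needed and it checks out, including equivariance since all maps involved are canonical. What your approach buys is independence from the cited remark and an explicit observation that in $\GGpd$ every map over a common base between small fibrations is itself small; what the paper's approach buys is brevity and applicability in settings where such a map need not itself be a fibration.
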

\begin{proof}
\label{proof:thm:universe}
 The lemmas \ref{lem:u1} and \ref{lem:u2} take care of \ref{item:u1} and \ref{item:u2} respectively. According to \cite[Remark 6.13]{shulman:invdia}, \ref{item:u3} is equivalent (under \ref{item:u1}) to the fact that any small fibration has a small path fibration. But, by \ref{lem:diagonalmap} any small fibration $f$ has indeed a small path fibration.	
\end{proof}

Now to go further, we need to recall what it means for a universe in a type-theoretic fibration category to be \emph{univalent}\label{univalenceproperty} (see also \cite[section 7]{shulman:invdia}). Let $\mathsf{Type}$ be a universe in the type theory under consideration. Given two \emph{small} types, \textit{i.e.} two elements of $\mathsf{Type}$, there is the type of weak equivalences between them. In a type-theoretic fibration category with a universe, this dependent type is represented by a fibration $E \fib U\times U$. Moreover, there is a natural map $U \rightarrow E$ that sends a type to its identity equivalence. By \ref{item:cat5} one can factor the diagonal map $\de\colon U \rightarrow U\times U$ as an acyclic cofibration followed by a fibration in the following commutative diagram
$$\xymatrix{U \ar[r]\ar@{ >->}[d]^{\rotatebox[origin=c]{90}{$\sim$}} & E \ar@{->>}[d] \\ PU \ar@{->>}[r] \ar@{-->}[ru] & U\times U\;.}$$
The universe $p\colon \widetilde{U} \fib U$ is univalent if the map $U \rightarrow E$ over $U\times U$ is a right homotopy equivalence, or equivalently (by the \emph{2-out-of-3} property and the fact that $U$ is fibrant like any object of a type-theoretic fibration category) if the dashed map is a right homotopy equivalence.

\begin{rmk}
Since in general there is no class of cofibrations in a type-theoretic fibration category, only a class of acyclic cofibrations, the definitions of right homotopies and right homotopy equivalences in this context involve only the so-called very good path objects.	
\end{rmk}
	
\section{Right homotopy equivalences in $\GGpd$}
\label{sec:righthomeq}

Now, we develop a few basic facts about right homotopy equivalences, then we give an explicit characterization of right homotopy equivalences with respect to the projective structure on $\GGpd$.

\begin{prop}
\label{prop:5.1}
	If $\mathscr{C}$ is a type-theoretic fibration category and $f\colon A \acof B$ is an acyclic cofibration, then $f$ is a right homotopy equivalence.
\end{prop}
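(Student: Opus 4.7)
The plan is to apply the defining lifting property of the acyclic cofibration $f$ twice: first to build a strict section $g\colon B\to A$ of $f$, then to exhibit $fg$ as right-homotopic to $\id_B$ via a path object of $B$.

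First I would produce the section. By axiom \ref{item:cat2}, the unique morphism $A\to \1$ is a fibration, so the lifting problem with top edge $\id_A$, left edge $f\colon A\acof B$, right edge $A\fib \1$, and bottom edge $B\to \1$ admits a diagonal filler $g\colon B\to A$ satisfying $gf=\id_A$.

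Next I would construct a right homotopy from $\id_B$ to $fg$. By axiom \ref{item:cat5} I factor the diagonal $\De_B\colon B\to B\times B$ as an acyclic cofibration $r\colon B\acof PB$ followed by a fibration $(p_1,p_2)\colon PB\fib B\times B$; in particular $p_1 r=p_2 r=\id_B$. Consider the square with top edge $rf\colon A\to PB$, left edge $f$, right edge $(p_1,p_2)$, and bottom edge $(\id_B,fg)\colon B\to B\times B$. Its two composites from $A$ to $B\times B$ agree: the top-right route yields $(f,f)$, and the bottom route yields $(f,fgf)=(f,f)$ using $gf=\id_A$. Since $f$ is an acyclic cofibration and $(p_1,p_2)$ is a fibration, a diagonal filler $H\colon B\to PB$ exists, and this $H$ is the desired right homotopy from $\id_B$ to $fg$.

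Combined, $g$ is a strict section of $f$ with $fg$ right-homotopic to $\id_B$, which is what it means for $f$ to be a right homotopy equivalence. The main subtlety is verifying that the notion of right homotopy is independent of the particular path object chosen; this is standard, since any two path objects for $B$ in a type-theoretic fibration category are related by a span of acyclic cofibrations over $B\times B$, so the induced homotopy relations coincide.
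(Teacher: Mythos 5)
Your argument is correct and is essentially the paper's own proof: both obtain $g$ with $g\circ f=\id_A$ by lifting against the fibration $A\fib\1$, and then obtain the right homotopy between $f\circ g$ and $\id_B$ by lifting $r\circ f$ against the path-object fibration $PB\fib B\times B$, using $g\circ f=\id_A$ to check commutativity of the square. The only cosmetic difference is the order of the components in the bottom map of the second square (and calling $g$ a ``section'' where $g\circ f=\id_A$ makes it a retraction), neither of which affects the argument.
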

\begin{proof}
\label{proof:prop:5.1}
See \cite[lemma 3.6]{shulman:invdia}.
\end{proof}

\begin{rmk}
\label{rmk:5.2}
	In particular, if $f$ is an acyclic cofibration in the type-theoretic fibration structure on $\GGpd$ given in \ref{cor:projectivettfc}, then $f$ is a right homotopy equivalence.
\end{rmk}

\begin{rmk}
\label{rmk:5.3}
	Since not all objects are cofibrant in the projective model structure on $\GGpd$, right homotopy equivalences are not the same as the weak equivalences of the model structure.
\end{rmk}

\begin{defn}
\label{defn:weaklyconnected}	
	Let $A$ be a groupoid together with an involution $\al$. One says that $A$ is \textbf{weakly connected} if and only if for every pair $(x,y)$ in $\text{Ob}(A)^2$ either $x$ and $y$ are in the same connected component of $A$ or $x$ and $\al(y)$ are in the same connected component.
\end{defn}

\begin{lem}
\label{lem:weaklyconnected}
	Every groupoid together with an involution is a coproduct in $\GGpd$ of weakly connected groupoids with involutions.
\end{lem}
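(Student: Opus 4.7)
The plan is to partition the objects of $A$ by an equivalence relation that merges each connected component of the underlying groupoid with its image under the involution. Define $x \sim y$ to hold iff $x$ and $y$ lie in the same connected component of the underlying groupoid, or $x$ and $\al(y)$ do. Reflexivity is clear, and symmetry follows from $\al^2 = \id$ by applying $\al$ to a connecting zig-zag. For transitivity I would run a four-case analysis on whether each of $x \sim y$ and $y \sim z$ is witnessed directly or via $\al$; in the mixed cases one applies $\al$ to the second zig-zag to align the two witnesses within the same connected component. This check is the only step that requires real attention; everything else is bookkeeping.

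For each $\sim$-class $c$, let $A_c$ denote the full subgroupoid of $A$ on the objects of $c$. The class $c$ is $\al$-stable: for $x \in c$, the relation $\al(x) \sim x$ is witnessed by the trivial observation that $\al(x)$ is in the same connected component as $\al(x)$. Hence $\al$ restricts to an involution $\al_c$ on $A_c$, and by definition of $\sim$ any two objects of $A_c$ are $\sim$-related, so $(A_c, \al_c)$ is weakly connected in the sense of Definition \ref{defn:weaklyconnected}.

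It remains to identify $(A, \al)$ with $\coprod_c (A_c, \al_c)$ in $\GGpd$. Since colimits in the presheaf category $\GGpd$ are computed pointwise, this coproduct is the coproduct $\coprod_c A_c$ of the underlying groupoids equipped with the obvious coproduct involution. The canonical comparison map $\coprod_c A_c \to A$ is an isomorphism of groupoids: every object of $A$ lies in exactly one class, and every morphism of $A$ links objects in the same connected component, hence \emph{a fortiori} in the same $\sim$-class, so no morphism of $A$ is lost or crosses between distinct $A_c$. Equivariance of this isomorphism is immediate from the $\al$-stability of each $c$ together with the fact that $\al_c$ is by construction the restriction of $\al$, which completes the proof.
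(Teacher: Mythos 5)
Your proof is correct and follows essentially the same route as the paper: the $\sim$-classes you construct are exactly the paper's ``weak connected components'' $\text{Ob}(A_x)\cup\text{Ob}(A_{\al(x)})$, and the paper likewise displays $A$ as the coproduct of the full subgroupoids on these classes. You merely make explicit (via the equivalence-relation check) the partition that the paper handles by ``choosing a representative for each set,'' which is a reasonable expansion of the same argument.
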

\begin{proof}
\label{proof:lem:weaklyconnected}
	Let $A$ be a groupoid together with an involution $\al$. Given $x$ in $\text{Ob}(A)$, we denote by $A_x$ the connected component of $x$ in the groupoid $A$. Now, we denote by $A_x^{\Z2}$ the full subgroupoid of $A$ whose set of objects is $\text{Ob}(A_x)\bigcup\text{Ob}(A_{\al(x)})$. This full subgroupoid, which we call the \emph{weak connected component} of $x$, has a natural involution induced from the involution $\al$. By choosing a representative for each set $\text{Ob}(A_x)\bigcup\text{Ob}(A_{\al(x)})$, one can display $A$ as a coproduct of its weakly connected components. 
\end{proof}

\begin{lem}
\label{lem:5.8}
Let $A$ be a groupoid together with an involution such that $A_{\f} = A$ and $w\colon B\acof C$ a projective acyclic cofibration in $\GGpd$. Then for any morphism $v\colon A\rightarrow C$ there exists a map $\widehat{v}\colon A\rightarrow B$ such that $w\circ \widehat{v} = v$.
\end{lem}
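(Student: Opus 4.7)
The plan is to reduce the lifting problem to a statement about fixed-point subgroupoids and invoke Proposition~\ref{prop:2.2}(iv).

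First I would observe that the hypothesis $A\f = A$ forces the underlying functor of $v$ to factor through the inclusion $C\f \hookrightarrow C$. Indeed, for any object $a \in A$, equivariance of $v$ combined with $\al(a) = a$ gives $\gm(v(a)) = v(\al(a)) = v(a)$, so $v(a)$ is a fixed point of $C$; and since $C\f$ is a \emph{full} subgroupoid of $C$, every morphism $v(h)$ in the image of $v$ automatically lies in $C\f$ as well. So $v$ factors as $A \xrightarrow{v'} C\f \hookrightarrow C$ for a unique functor $v'$.

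Next, I would apply Proposition~\ref{prop:2.2}(iv) to the projective acyclic cofibration $w$: its restriction $w\f \colon B\f \to C\f$ is an isomorphism of groupoids. Let $r\colon C\f \to B\f$ denote its inverse, and take $\widehat v$ to be the composite $A \xrightarrow{v'} C\f \xrightarrow{r} B\f \hookrightarrow B$. The identity $w\circ \widehat v = v$ is then immediate from $w\f \circ r = \id_{C\f}$ together with the compatibility of $w$ with the inclusions of fixed-point subgroupoids.

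The only remaining verification is that $\widehat v$ is equivariant, and this is where any minor obstacle lies. On objects it is automatic, since $\widehat v(a)\in B\f$ satisfies $\be(\widehat v(a)) = \widehat v(a) = \widehat v(\al(a))$. On a morphism $h\colon a\to a'$ of $A$, the two morphisms $\widehat v(\al(h))$ and $\be(\widehat v(h))$ both go between the same pair of fixed objects $\widehat v(a), \widehat v(a')$ of $B$, and applying $w$ to either yields $v(\al(h)) = \gm(v(h))$ by equivariance of $v$ and of $w$; faithfulness of $w\f$ then forces the two morphisms to coincide. The argument is quite direct, with the real content packaged into Proposition~\ref{prop:2.2}(iv); no step looks genuinely hard.
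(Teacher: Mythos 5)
Your proof is correct and takes essentially the same route as the paper: both arguments reduce everything to \ref{prop:2.2}, using the induced identification of $B\f$ with $C\f$ to define $\widehat{v}$ on objects (you via the inverse of the isomorphism $B\f\to C\f$, the paper via the bijection on fixed points together with full faithfulness of $w$) and then checking equivariance on morphisms by faithfulness. Packaging $\widehat{v}$ as a composite of functors is only a cosmetic difference, though it does make functoriality automatic.
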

\begin{proof}
\label{proof:lem:5.8}	
We define $\widehat{v}$ as follows. Let $x$ be an element of $\text{Ob}(A)$. By the characterization of acyclic cofibrations \ref{prop:2.2}, there exists a unique $y\in B\f$ such that $v(x) = w(y)$. Take $\widehat{v}(x) = y$. Now, let $f\colon x\rightarrow x'$ be a morphism in $A$. Since $w$ is fully faithful, the induced map from $B(y,y')$ to $C(v(x),v(x'))$ is a bijection. Hence, there exists a unique map $\widehat{v}(f)$ such that $w(\widehat{v}(f)) = v(f)$. Note that $w(\be(\widehat{v}(f))) = v(\al(f))$, so by injectivity one has $\widehat{v}(\al(f)) = \be(\widehat{v}(f))$ as expected.
\end{proof}

\begin{lem}
\label{lem:5.9}
Let $f,g: A\rightarrow B$ be two right homotopic maps in $\GGpd$ such that $A_{\f} = A$. Then one has $f = g$.
\end{lem}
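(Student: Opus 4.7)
The plan is to exploit \ref{lem:5.8} applied to the reflexivity map of a path object for $B$. Unpacking the definition, since $f$ and $g$ are right homotopic, there is some path object $B \acof PB \fib B\times B$ for $B$, that is, a factorization of the diagonal $\Delta_B$ as an acyclic cofibration $r\colon B \acof PB$ followed by a fibration $\langle p_1,p_2\rangle\colon PB \fib B\times B$, and a map $H\colon A \rightarrow PB$ such that $p_1\circ H = f$ and $p_2\circ H = g$.

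Because $r$ is an acyclic cofibration in $\GGpd$ and $A\f = A$, I would apply \ref{lem:5.8} to the map $H\colon A \rightarrow PB$ to obtain a lift $\widehat{H}\colon A \rightarrow B$ satisfying $r\circ \widehat{H} = H$. The key observation is then that $p_1\circ r = p_2\circ r = 1_B$, which follows from the factorization $\langle p_1,p_2\rangle\circ r = \Delta_B$. Composing on the left by $p_1$ and $p_2$ in turn yields
$$f \;=\; p_1\circ H \;=\; p_1\circ r \circ \widehat{H} \;=\; \widehat{H} \;=\; p_2\circ r\circ \widehat{H} \;=\; p_2\circ H \;=\; g,$$
which is the desired equality.

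There is no real obstacle here: the technical work has already been done in \ref{lem:5.8}, which says exactly that objects $A$ with $A\f = A$ behave as if they were ``cofibrant enough'' to lift strictly against acyclic cofibrations. The present lemma is then the standard abstract fact that, against such $A$, right homotopy collapses to equality, simply because the homotopy factors through $r$ and both endpoint projections retract $r$ to the identity.
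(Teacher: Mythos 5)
Your proof is correct and is essentially identical to the paper's: both apply \ref{lem:5.8} to the homotopy $H\colon A\to PB$ to factor it through the acyclic cofibration $B\acof PB$, then use the factorization of the diagonal to conclude $f=\widehat{H}=g$. The only cosmetic difference is that you spell out the two projections $p_1,p_2$ where the paper works directly with $\Delta\circ\widehat{h}=\langle f,g\rangle$.
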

\begin{proof}
\label{proof:lem:5.9}
Indeed, let $PB$ be a path object for B
$$\xymatrix{B \ar@{ >->}[r]^{\sim}_{w} \ar[rd]_{\Delta} & PB \ar@{->>}[d] \\
	& B\times B\;,}$$
and $h$ a right homotopy between $f$ and $g$
$$\xymatrix{ & PB\ar@{->>}[d] \\
	A \ar[ru]^{h}\ar[r]_-{\langle f,g\rangle} & B\times B\;.}$$
By \ref{lem:5.8} applied with $C = PB$ and $v = h$, there exists $\widehat{h}$ such that $w\circ \widehat{h} = h$. So, we have $\Delta \circ \widehat{h} = \langle f,g\rangle$, hence $\widehat{h} = f = g$.
\end{proof}

\begin{lem}
	\label{righthomot}
	Two morphisms $f,g: A \rightarrow B$ in $\GGpd$ are right homotopic in the projective model structure if and only if there exists a $C_2$-equivariant natural isomorphism $\eta: f \iso g$ such that $\eta_a = 1_{f(a)}$ for every fixpoint $a$.
\begin{proof}\label{proof:righthomot}
	We first construct an explicit path object for any $A \in \GGpd$. Let $PA$ be the groupoid given by the following pushout
	$$\xymatrix{A\ar[r] \ar[d] & A^I \ar[d] \\ A^I \ar[r] & PA\pushoutcorner \:,}$$
	where $A^I$ is the standard path object of $\underline{A}$. The involution of $PA$ maps a triple of the form $(a,a,1_a)$ to $(\alpha(a), \alpha(a), 1_{\alpha(a)})$ and a triple of the form $(a,b,\varphi)$ with $\varphi \neq 1$ to $(\alpha(a), \alpha(b), \alpha(\varphi)$ in the other leg of the pushout. We have obvious equivariant morphisms $r: A \rightarrow PA$ and $(s,t): PA \rightarrow A \times A$, which are easily checked to be an acyclic cofibration and a fibration, respectively. \\
	Postcomposing $h$ with the obvious equivariant map $PB \rightarrow \text{Arr}(B)$ provides the required isomorphism $f \iso g$.  \\
	Conversely, assume there exists $\eta: f \iso g$ such that $\eta_a = 1_{f(a)}$ for every fixpoint $a$. We construct a lift $\hat{\eta}: A \rightarrow PB$ of $\langle f,g\rangle: A \rightarrow B\times B$ through $PB \rightarrow B\times B$ as follows. If $\eta_a = 1_{f(a)}$, then we take $\hat{\eta} = (f(a), f(a), 1_{f(a)})$. Otherwise, by assumption the set of objects remaining is a union of two-element $\alpha$-orbits. For each such orbit, choose arbitrarily one element $a$ and define $\hat{\eta}$ as $(f(a), g(a), \eta_a)$ and $\hat{\eta}(\alpha(a))$ as $(\beta(f(a)), \beta(g(a)), \beta(\eta_a))$ in the other leg of the pushout. The assignment of $\hat{\eta}$ on morphisms is straightforward.    
\end{proof}	
	
\end{lem}	

We give the following characterization of the right homotopy equivalences in $\GGpd$ with respect to the projective structure.
\begin{thm}
\label{thm:homotequiv}
	Let $f\colon A\rightarrow B$ be a morphism in $\GGpd$. The following are equivalent:
	\begin{enumerate}[label=(\roman*)]
		\item $f$ is a right homotopy equivalence.
		\item $\underline{f}$ is an equivalence of groupoids and induces an isomorphism between the full subgroupoids of fixed points $A\f$ and $B\f$.
		\item $\underline{f}$ is an equivalence of groupoids and induces an isomorphism between the subgroupoids of fixed points and fixed morphisms $A^{\Z2}$ and $B^{\Z2}$.
		\item $\underline{f}$ is an equivalence of groupoids and induces a bijection between the set of fixed points in $A$ and the set of fixed points in $B$.
	\end{enumerate}
\end{thm}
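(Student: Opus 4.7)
The plan is to establish the cycle $(i) \Rightarrow (iv) \Leftrightarrow (iii) \Leftrightarrow (ii) \Rightarrow (i)$. I expect the three equivalences $(iv) \Leftrightarrow (iii) \Leftrightarrow (ii)$ to be formal consequences of $\underline{f}$ being fully faithful (as part of being an equivalence of groupoids) combined with the equivariance of $f$. A bijection on fixed points upgrades, via full faithfulness, to an isomorphism $A^{\Z2}\cong B^{\Z2}$ (both fixed objects and fixed morphisms match, using that $\underline f$ maps fixed morphisms bijectively onto fixed morphisms between fixed points), and further to $A\f\cong B\f$; conversely these isomorphisms restrict to a bijection on objects.

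For $(i)\Rightarrow(iv)$ I would let $g\colon B\to A$ be a right homotopy inverse with right homotopies $\gamma\colon gf\sim_r 1_A$ and $\delta\colon fg\sim_r 1_B$. Since a path object $PB$ in $\GGpd$ has underlying groupoid $\underline{PB}$ which is a path object in $\Gpd$ for $\underline B$ (the acyclic cofibration and fibration in $\GGpd$ are objectwise, by \cref{prop:2.2}), the forgetful functor carries right homotopies to right homotopies; so $\underline f$ is a right homotopy equivalence and hence an equivalence of groupoids. For the fixed-point bijection: given $x\in A\f$, pick it out by the equivariant map $\chi_x\colon \1\to A$ (with $\1$ trivially involuted) and pull back $\gamma$ along $\chi_x$ to obtain a right homotopy between the equivariant maps $gf\circ\chi_x$ and $\chi_x$. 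Since $\1\f=\1$, \cref{lem:5.9} forces $gf(x)=x$; the symmetric argument gives $fg(y)=y$ for $y\in B\f$, so $f|_{A\f}$ and $g|_{B\f}$ are mutually inverse bijections.

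For $(ii)\Rightarrow(i)$, the substantive direction, I first construct an equivariant $g\colon B\to A$. On $y\in B\f$ I set $g(y)$ to be the unique preimage in $A\f$ and $\theta_y := 1_y$. Picking a representative from each $\Z2$-orbit of non-fixed objects, for such a representative $y$ I use essential surjectivity of $\underline f$ to pick $g(y)\in A$ and $\theta_y\colon f(g(y))\to y$, then extend by $g(\be(y)) := \al(g(y))$ and $\theta_{\be(y)} := \be(\theta_y)$. For morphisms $k\colon y\to y'$ I let $g(k)$ be the unique arrow with $f(g(k))=\theta_{y'}^{-1}\circ k\circ \theta_y$ (existence and uniqueness by full faithfulness); functoriality and equivariance are routine. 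This produces equivariant natural isomorphisms $\theta\colon fg\Rightarrow 1_B$ and (analogously) $\phi\colon gf\Rightarrow 1_A$ which, by construction, collapse to the identity on fixed points. I then need to exhibit path objects $PA, PB$ through which $\phi,\theta$ factor as right homotopies; the identity-on-the-fixed-part property is what allows such a lift through an acyclic cofibration $B\acof PB$ whose fixed points bijectively match those of $B$.

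The hard part will be this last step. The naive candidate $B^{\bI}$ equipped with the pointwise involution is \emph{not} a path object in $\GGpd$, because the inclusion $b\mapsto 1_b$ fails to induce a bijection on fixed points: any non-identity fixed arrow between two fixed points of $B$ is an unwanted fixed object of $B^\bI$. A correct path object requires either cutting $B^\bI$ down to a subgroupoid whose objects are non-fixed arrows and identities at fixed points, then enlarging just enough so that the projection to $B\times B$ remains a fibration, or appealing to the abstract factorization of $\Delta_B$ in the projective model structure and verifying the lift of $\theta$ by a direct analysis of the cells attached in the small object argument. This is the same technical phenomenon that distinguishes right homotopy equivalences from general weak equivalences in the projective structure, and it foreshadows the failure of univalence discussed later.
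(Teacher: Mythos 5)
Your handling of $(i)\Rightarrow(iv)$ and of the equivalences $(iv)\Leftrightarrow(iii)\Leftrightarrow(ii)$ is correct and essentially the paper's argument (the paper restricts the homotopies to $A^{\Z2}$ and $B^{\Z2}$ and invokes Lemma~\ref{lem:5.9} there; your variant of testing against equivariant points $\1\to A$ is the same idea). The problem is $(ii)\Rightarrow(i)$. Constructing an equivariant quasi-inverse $g$ together with equivariant natural isomorphisms $\theta\colon fg\Rightarrow 1_B$ and $\phi\colon gf\Rightarrow 1_A$ that are identities on the fixed part is the easy half; you yourself flag that converting these natural isomorphisms into genuine \emph{right} homotopies through a projective path object is ``the hard part,'' and you then leave it unproved, offering only two unexecuted strategies. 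That step is not a technicality: it is the entire content of the theorem (it is exactly what fails for weak equivalences that are not right homotopy equivalences, cf.\ Remark~\ref{rmk:5.3}), so as it stands the proposal has a genuine gap. Moreover your first suggested repair is doubtful: if you delete the non-identity fixed arrows from $B^{\bI}$ you destroy the fibration to $B\times B$ (e.g.\ for $B$ a one-object groupoid with automorphism group $\Z2$ and trivial involution, the lift of $(\tau,1)$ at $1_\ast$ lands on the deleted object $\tau$), and it is not explained what ``enlarging just enough'' would mean while keeping the fixed points down to $B\f$.

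The paper resolves this without ever constructing an explicit path object. It first reduces to the case where $A$ is weakly connected (Lemma~\ref{lem:weaklyconnected}) and where $f$ is surjective on objects (by factoring through $\mathrm{Im}f$ and observing that $\mathrm{Im}f\hookrightarrow B$ is an acyclic cofibration by Proposition~\ref{prop:2.2}, hence a right homotopy equivalence by Proposition~\ref{prop:5.1}). It then runs a Zorn's lemma argument on triples $(A',g',h')$ consisting of a full subgroupoid $A'\supseteq A\f$ stable under the involution, a partial inverse $g'$ over $B$, and an actual right homotopy $h'\colon A'\to PA'$ between $g'\circ f_{|A'}$ and $1_{A'}$. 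The extension step is where the real work happens: adding a single non-fixed orbit $\{x,\al(x)\}$ to $A_{\mathrm{max}}$ is exhibited as a pushout of the generating acyclic cofibration $S(i)\colon S(\1)\hookrightarrow S(\bI)$ (Lemma~\ref{lem:5.7}), so the inclusion $A_{\mathrm{max}}\hookrightarrow\widetilde{A}$ is an acyclic cofibration, and the extended homotopy $\widetilde{h}$ is obtained as a diagonal filler against the fibration $P\widetilde{A}\fib\widetilde{A}\times\widetilde{A}$. If you want to salvage your route, you would need to prove the lemma you are implicitly assuming --- that an equivariant natural isomorphism which is the identity on fixed points can be realized as a right homotopy --- and the only proof I see of that is essentially the paper's transfinite/Zorn construction, so nothing is saved by first building $\theta$ and $\phi$ explicitly.
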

\begin{proof}
\label{proof:thm:homotequiv}
We will successively prove $(i)\Rightarrow (iv)$, $(iv)\Rightarrow (iii)$, $(iii)\Rightarrow (ii)$ and $(ii)\Rightarrow (i)$.
We prove $(i)\Rightarrow (iv)$. Assume $(i)$, so there exists $g\colon B\rightarrow A$ in $\GGpd$ such that $f\circ g\overset{r}{\sim}1_{B}$ and $g\circ f\overset{r}{\sim}1_{A}$. Hence, we have $(f\circ g)_{|B^{\Z2}}\overset{r}{\sim}1_{B^{\Z2}}$ and $(g\circ f)_{|A^{\Z2}}\overset{r}{\sim}1_{A^{\Z2}}$. So, by \ref{lem:5.9} one has $(f\circ g)_{|B^{\Z2}} = 1_{B^{\Z2}}$ and $(g\circ f)_{|A^{\Z2}} = 1_{A^{\Z2}}$. We conclude that $f^{\Z2}: A^{\Z2}\rightarrow B^{\Z2}$ is an isomorphism. Hence, in particular $f$ induces a bijection between the set of fixed points in $A$ and the set of fixed points in $B$. \\
We prove $(iv)\Rightarrow (iii)$. It is straighforward using the fact that $\underline{f}$ is in particular a fully faithful functor. \\
Next, we prove $(iii)\Rightarrow (ii)$. Note that $f_{|A\f}$ is bijective on objects, since $f^{\Z2}$ is so by assumption. Moreover, $f$ is fully faithful, hence $f_{|A\f}$ is an isomorphism. \\
Last, we prove $(ii)\Rightarrow (i)$. Note that by \ref{lem:weaklyconnected} we can assume without loss of generality that $A$ is weakly connected. Also, one can assume that $f$ is surjective. Indeed, first note that one can factorize $f$ through its image $\text{Im}f$, the full subgroupoid of $B$ whose objects are of the form $f(x)$ for some $x\in A$. The groupoid $\text{Im}f$ can be equipped with an involution thanks to $\be$. Indeed, given $y\in B$ such that there exists $x\in A$ and $f(x) = y$, then $f(\al(x)) = \be(f(x)) = \be(y)$. Second, we prove that the inclusion $\text{Im}f \hookrightarrow B$ is a projective acyclic cofibration. Indeed, since $(\text{Im}f)\f = B\f$ and $\text{Im}f$ is equivalent to $B$, we conclude by \ref{prop:2.3}. So, thanks to \ref{rmk:5.2} this inclusion is a right homotopy equivalence. One concludes by the \emph{2-out-of-3} property that $f\colon A\rightarrow B$ is a right homotopy equivalence if and only if $A \rightarrow\text{Im}f$ is so. The morphism $A \rightarrow\text{Im}f$ is still an equivalence of groupoids by the \emph{2-out-of-3} property and this morphism still induces an isomorphism between $A\f$ and $(\text{Im}f)\f$, since $(\text{Im}f)\f = B\f$. So, without loss of generality one can assume that our map $f$ is also surjective onto the objects (hence onto the morphisms). One wants to prove that $f$ is a homotopy equivalence. Below we will provide a morphism $g\colon B\rightarrow A$ in $\GGpd$ such that $f\circ g = 1_B$ and $g\circ f\overset{r}{\sim}1_A$.  \\
For each $b\in B$ we define an object $g(b)\in A$ as follows. If $b$ is a fixpoint, we take $g(b)$ to be the unique fixpoint $a$ with $f(a) = b$. For all the non-fixpoints, we choose one element $b$ out of each $\beta$-orbit, and define $g(b)$ to be any $a\in A$ satisfying $f(a) = b$ and define $g(\beta(b)) = \alpha(g(b))$. 
Now, we extend $g$ to be a functor. On maps, $g(\varphi: b \rightarrow b')$ is the unique morphism in $A(g(b), g(b'))$ such that $f (g(\varphi)) = \varphi$. This clearly makes $g$ $C_2$-equivariant. We know that $f\circ g = 1_B$, hence it remains to show $g\circ f\overset{r}{\sim}1_A$. So by \ref{righthomot} we just need to check that there is an equivariant natural isomorphism $\eta: g\circ f \iso 1_A$ such that $\eta_a = 1_a$ for every fixpoint $a$. If we take $\eta_a\in A(g(f(a)), a)$ to be the unique morphism satisfying $f(\eta_a) = 1_{f(a)}$, then the unicity and the equivariance of $f$ easily imply the equivariance of $\eta$ ({\em i.e.} $\alpha(\eta_a) = \eta_{\alpha(a)}$). Moreover, $\eta_a = 1_a$ for every fixpoint $a$.
\end{proof}

\section{The failure of univalence}
\label{sec:failure}

Tracing through the interpretation of type theory, one finds that the fibration $E \fib U\times U$, interpreting the dependent type of weak equivalences, is such that the set of objects of the fiber over a pair $(x,y)\in U\times U$ is the set of isomorphisms in $U$ between $x$ and $y$. Moreover, the involution on $E$ maps $(x,y,\varphi)$, where $\varphi$ is an isomorphism from $x$ to $y$, to $(\upsilon(x),\upsilon(y),\upsilon(\varphi))$ (with $\upsilon$ the involution on $U$).

\begin{prop}
\label{prop:failure}
	Univalence does not hold for the universe $p\colon \widetilde{U}\rightarrow U$ (\textit{cf.} \ref{sec:universes}) in the type-theoretic fibration category given in \ref{cor:projectivettfc}.
\end{prop}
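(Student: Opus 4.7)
My plan is to apply Theorem~\ref{thm:homotequiv} (specifically the equivalence $(i)\Leftrightarrow(iv)$) to the canonical map $r\colon U\rightarrow E$ sending each object to its identity equivalence. Since univalence is equivalent to $r$ being a right homotopy equivalence, and since the map on fixed points induced by $r$ is plainly injective, it will suffice to exhibit a single fixed point of $E$ not in the image of $r$.

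First, I would describe the fixed points of $U$ and of $E$ explicitly. A fixed point of $U$ under $\upsilon$ is a triple $(A,B,\varphi)$ with $(A,B,\varphi)=(B,A,\varphi^{-1})$, i.e., $A=B$ and $\varphi\circ\varphi=1_A$, so an involution $\varphi$ on a $\kappa$-small discrete groupoid $A$. The map $r$ sends such an $x=(A,A,\varphi)$ to $(x,x,1_x)$ with $1_x=(1_A,1_A)$. A fixed point of $E$ is a triple $(x,y,\psi)$ with $x,y$ fixed in $U$ and $\psi$ a morphism in $U$ from $x$ to $y$ satisfying $\upsilon(\psi)=\psi$; writing $\psi=(\rho,\tau)$, the involution on $E$ acts by swapping the two components (this is the involution on morphisms of $U$), so the condition becomes $\rho=\tau$, with the single map $\rho$ required to commute with the involutions.

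Now for the counterexample, take $A$ to be the discrete two-element groupoid and $\varphi=1_A$, so that $x:=(A,A,1_A)$ is a fixed point of $U$. Let $\sigma\colon A\rightarrow A$ be the non-identity bijection; it commutes trivially with $1_A$, so $\psi:=(\sigma,\sigma)$ is a well-defined morphism in $U$ from $x$ to $x$, and $\upsilon(\psi)=\psi$ since both components are equal. But $\psi\neq(1_A,1_A)=1_x$, so $(x,x,\psi)$ is a fixed point of $E$ not of the form $(x',x',1_{x'})$, hence not in the image of the map induced by $r$ on fixed points. By clause $(iv)$ of Theorem~\ref{thm:homotequiv}, $r$ fails to be a right homotopy equivalence, so the universe $p\colon\widetilde{U}\rightarrow U$ is not univalent.

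There is no real obstacle here; the only thing to verify carefully is that the involution on $E$ described in the paragraph preceding the proposition acts on the third component $\psi=(\rho,\tau)$ exactly by the involution $\upsilon$ on morphisms of $U$, which (by the definition of $U$ given in Section~\ref{sec:universes}) swaps the two entries $(\rho,\tau)\mapsto(\tau,\rho)$. This swap is what creates an abundance of ``non-identity'' fixed automorphisms in $E$ that are not visible to $U$, and it is the structural reason univalence breaks under the projective structure.
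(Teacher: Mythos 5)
Your proof is correct and takes essentially the same route as the paper: both reduce univalence to the canonical map $U\to E$ being a right homotopy equivalence, invoke Theorem~\ref{thm:homotequiv} to reduce that to surjectivity on fixed points, and then exhibit a fixed point of $E$ whose third component is a non-identity self-equivalence of a fixed object of $U$. The only difference is the choice of witness --- you use the two-element discrete groupoid with its swap, while the paper uses $\mathbb{N}\cong 2\mathbb{N}$ via doubling (and $\mathbb{Z}$ with negation); both are valid, and your explicit description of the fixed points of $U$ and $E$ matches the involutions defined in Section~\ref{sec:universes}.
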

\begin{proof}
\label{proof:prop:failure}
	The morphism $U\to E$ (\textit{cf.} the end of section \ref{univalenceproperty}) is defined as follows 
	$$\ffour{U}{E}{x}{(x,x,1_x)\;,}$$
	\textit{i.e.} it maps an object $x$ to the identity isomorphism of $x$ in $U$. Note that this morphism is not surjective onto the fixed points of $E$. Indeed, it is easy to find a non-trivial fixed point of $E$.
	For instance, take the following elements of $U\colon (\mathbb{N},\mathbb{N},1_{\mathbb{N}})$, $(2\mathbb{N},2\mathbb{N},1_{2\mathbb{N}})$,
	and the isomorphism $(2n,2n)$ between them, where $2n$ denotes the bijection from $\mathbb{N}$ to $2\mathbb{N}$ that maps $n$ to $2n$. Then this triplet is a fixed point of $E$, where the third component is not the identity. So, it does not belong to the image of the morphism above. Note that we can even take two identical small groupoids and still find a fixed point of $E$ that does not belong to the image of $U\rightarrow E$. Indeed, consider $(\mathbb{Z},\mathbb{Z},1_{\mathbb{Z}})$ and the automorphism $(-n,-n)$, where $-n$ denotes the bijection from $\mathbb{Z}$ to $\mathbb{Z}$ that maps $n$ to $-n$. So, according to \ref{thm:homotequiv} the map $U\rightarrow E$ is not a right homotopy equivalence, hence univalence does not hold in the projective type-theoretic fibration structure on $\GGpd$.
\end{proof}

Below we investigate whether function extensionality holds. For details about the meaning of function extensionality in a type-theoretic fibration category see \cite[section 5]{shulman:invdia}. In particular, according to \cite[lemma 5.9]{shulman:invdia} function extensionality holds in the internal language of a type-theoretic fibration category if and only if dependent products along fibrations preserve acyclic fibrations (\textit{i.e.} fibrations that are also right homotopy equivalences).

\begin{notn}
	\label{notn:5.6}	
	In the rest of this section we will use the following notations:
	\begin{align*}
	S(\1) & \coloneqq \1\textstyle\coprod\1 \coloneqq \xymatrix{0 & 0'} \\
	S(\bI) & \coloneqq \bI\textstyle\coprod\bI \coloneqq  \xymatrixrowsep{0.5pc}\xymatrix{0 \ar[r]^{\phi} & 1 \;,\\
		0' \ar[r]^{\phi'} & 1'}
	\end{align*}
	both equipped with the swap involution.
\end{notn}	

\begin{prop}
\label{prop:funext}
	Function extensionality does not hold in the internal type theory of the projective type-theoretic fibration structure on $\GGpd$.
\end{prop}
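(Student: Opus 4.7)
The plan is to invoke the criterion stated just before the proposition: function extensionality is equivalent to the statement that $\Pi_g$ preserves acyclic fibrations (fibrations that are also right homotopy equivalences). Since we have the explicit characterization of right homotopy equivalences in \ref{thm:homotequiv}, it suffices to exhibit a fibration $g$ and a fibration $f$ which is a right homotopy equivalence, such that $\Pi_g f$ fails to induce a bijection on fixed points of its codomain. This reduces the problem to a concrete combinatorial computation using the explicit construction of $\Pi_g$ given in the proof of \ref{thm:rightadjoint}.

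The counterexample I would propose is the following. Take $g\colon S(\1)\fib\1$, the canonical projection from the two-point groupoid with swap involution onto the terminal object, and take $f\coloneqq S(\bI\to\1)\colon S(\bI)\fib S(\1)$, obtained by applying the functor $S$ from \ref{notn:2.1} to the canonical map $\bI\to\1$. First I would check that $f$ is a fibration (objectwise, this is the fibration $\bI\to\1$ in $\Gpd$), and that $f$ is a right homotopy equivalence: the underlying functor is an equivalence of groupoids (each fiber is isomorphic to $\bI$), and both $S(\bI)$ and $S(\1)$ have empty sets of fixed points since the swap involution acts freely, so the bijection condition of \ref{thm:homotequiv}$(iv)$ is vacuously satisfied.

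Next I would apply the construction in the proof of \ref{thm:rightadjoint} to compute $\Pi_g f$ explicitly. Objects of $\mathrm{dom}(\Pi_g f)$ over the unique point of $\1$ are partial sections $s\colon g^{-1}\{*\}=S(\1)\to S(\bI)$ of $\underline{f}$; such a section is determined by a choice of object in each fiber, giving four sections in total. Using the description of the involution $\pi_g f$ given in the proof of \ref{thm:rightadjoint}, a fixed point corresponds to a section $s$ satisfying $s(0)=\gm(s(0'))$, where $\gm$ is the swap involution on $S(\bI)$. Since $S(\bI)$ has two objects above $0$ (namely $0$ and $1$ of the first copy of $\bI$), the relation $s(0)=\gm(s(0'))$ is self-consistent for each of the two choices of $s(0)$, yielding exactly two fixed points of $\mathrm{dom}(\Pi_g f)$. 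In contrast, the codomain $\1$ has a unique fixed point.

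Therefore $\Pi_g f$ does not induce a bijection between the sets of fixed points, so by \ref{thm:homotequiv} it fails to be a right homotopy equivalence, even though it is automatically a fibration by \ref{thm:rightadjoint}. This shows that dependent products along the fibration $g$ do not preserve acyclic fibrations, and by the criterion recalled before the proposition, function extensionality fails. The main obstacle is the bookkeeping in the fixed-point computation for $\mathrm{dom}(\Pi_g f)$: one must trust the explicit formula for the involution $\pi_g f$ from \ref{thm:rightadjoint} and carefully match it against the definition of the swap involution on the free object $S(\bI)$ so as to count the equivariant sections correctly.
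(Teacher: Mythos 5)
Your proposal is correct and is essentially identical to the paper's own proof: the same counterexample $g\colon S(\1)\fib\1$ and $f\colon S(\bI)\fib S(\1)$, the same verification that $f$ is a right homotopy equivalence via \ref{thm:homotequiv} (both fixed-point sets being empty), and the same count of exactly two equivariant sections as fixed points of $\mathrm{dom}(\Pi_g f)$ against the single fixed point of $\1$. The two fixed sections you find by solving $s(0)=\gm(s(0'))$ are precisely the sections $s_1$ and $s_2$ exhibited in the paper.
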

\begin{proof}
\label{proof:prop:funext}
	It suffices to prove that there exist a fibration $g$ and a fibration $f$ such that $f$ is a right homotopy equivalence and $\Pi_{g}f$ is not a right homotopy equivalence. In order to achieve this, consider $g\colon S(\1)\rightarrow\mathbf{1}$ and $f\colon S(\bI)\rightarrow S(\1)$ (see \ref{notn:5.6}) in $\GGpd$. The reader can easily check that $\underline{f}$ is fully faithful and surjective (and so it is an acyclic fibration in $\Gpd$), and $f$ restricted to fixed points is the identity (since $(S(\bI))\f= (S(\1))\f = \varnothing$). So, according to \ref{thm:homotequiv} $f$ is a right homotopy equivalence. Now, since $\Pi_{g}f$ goes from $\text{dom}(\Pi_{g}f)$ to $\mathbf{1}$, it suffices to prove that $\text{dom}(\Pi_{g}f)$ has at least two fixed points. Using the explicit construction of the right adjoint $\Pi_g$ given in \cite[lemma 4.3.4]{bordg:thesis}, one notices that a fixed point of $\text{dom}(\Pi_{g}f)$ over $\mathbf{1}$ is nothing but a section $s$ of $f$ such that $\pi_{g}f(s) = s$ (where $\pi_gf$ is the involution on $\Pi_gf$). But we have two such sections $s_1$ and $s_2$. Indeed, take 
	$$\fsix{s_1\colon S(\1)}{S(\bI)}{0}{0}{0'}{0'}$$  and 
	$$\fsix{s_2\colon S(\1)}{S(\bI)}{0}{1}{0'}{1'\;.}$$
\end{proof}

\begin{rmk}
	We recall that the univalence axiom implies function extensionality for types in the universe. 
	However, since the above proposition involves non-discrete groupoids, it does not give us an alternative proof that univalence does not hold for the universe $p\colon \widetilde{U}\rightarrow U$. Also, note that according to \cite[Remark 5.10]{shulman:invdia} function extensionality holds in the natural type-theoretic fibration structure on $\Gpd$. So, with the projective type-theoretic fibration structure on $\GGpd$ function extensionality is also broken.
\end{rmk}

\section{Conclusion}
\label{sec:conclusion}

Our work suggests that projective fibrations are a good choice in order to lift a bare type-theoretic fibration structure from a category $\mathscr{C}$ to a functor category $[\mathcal{D},\mathscr{C}]$ even in the presence of non-trivial isomorphisms in $\mathcal{D}$, and eventually to provide the additional structure needed for universes. But, projective fibrations do not seem to be adequate for the stability of the univalence property due to the lack of projective homotopy equivalences. \\
Nevertheless, the model presented in this article provides a new model of type theory with dependent sums, dependent products, identity types and a universe. To the best of our knowledge it was the first model derived from a Quillen model structure where not all objects are cofibrant. If it should happen in a model that not all objects are fibrant-cofibrant, then our method of proof makes it clear that even when a whole model structure is available at hand only the classes of fibrations, acyclic cofibrations and right homotopy equivalences are relevant in the context of type theory. \\
Moreover, this model together with the model in \cite{Bordg2019}, using the Quillen equivalent injective model structure on the same bare category, gives a counterexample to a tentative \emph{model invariance problem}\footnote{\url{https://ncatlab.org/homotopytypetheory/show/model+invariance+problem}} suggested by Michael Shulman. \\
Last, as explained in \cite{lmcs:3217} the definition of propositional truncation in Martin-L\"of type theory is an open problem$\colon$ ``Is weak propositional truncation definable in Martin-L\"of type theory ? This is commonly believed to not be the case. However, the standard models do have propositional truncation, making it hard to find a concrete proof.'' (p.34 \textit{ibid.}). One could check whether the propositional truncation holds in this weird projective model in $\GGpd$. In the negative, it would provide a way to solve this open problem.

\bibliographystyle{alpha}
\bibliography{all}

\end{document}